\newtheorem{theorem}{Theorem}
\newtheorem{cor}{Corollary}
\theoremstyle{definition}
\newtheorem{defn}{Definition}
\newtheorem{lemma}{Lemma}
\newtheorem*{Dickinson}{Theorem (Dickinson)}
\newcommand{\mb}[1]{\mathbb{#1}}
\newcommand{\mbf}[1]{\mathbf{#1}}
\newcommand{\mcal}[1]{\mathcal{#1}}
\newcommand{\W}{W_0\left( m,n;\psi \right)}
\begin{document}
\title{The metrical theory  of simultaneously small linear forms. }

\author[M.~Hussain]{Mumtaz Hussain}
\address{Mumtaz Hussain, Department of Mathematics, University of
  York, Heslington, York, YO10 5DD, United Kingdom}
\email{mh577@york.ac.uk}

\author[J.~Levesley]{Jason Levesley}
\address{Jason Levesley, Department of Mathematics, University of
  York, Heslington, York, YO10 5DD, United Kingdom}
\email{jl107@york.ac.uk}

\subjclass[2000]{Primary 11J83; Secondary 11J13, 11K60}

\keywords{Diophantine approximation, Khintchine type theorems,
system linear forms, Hausdorff measure.}

\begin{abstract}
In this paper we investigate  the metrical theory of
Diophantine approximation associated with linear forms that are
simultaneously small for infinitely many integer vectors;  i.e.
forms which are close to the origin.   A complete Khintchine--Groshev
type theorem is established, as well as its Hausdorff measure
generalization. The latter implies the complete Hausdorff dimension theory.
\end{abstract}

\maketitle


\section{Introduction}

Let $\psi:\mb{R}^+\to\mb{R}^+  $ be a real positive
decreasing function with $\psi(r)\to{}0$ as $r\to\infty$.
Such a function will be refereed to as an \emph{approximation}
function. An $m\times n$ matrix $X=(x_{ij})\in\mb{R}^{mn}$ is said
to be \emph{$\psi$--approximable} if the system of inequalities
\[
|q_{_{1}}x_{_{1}i}+q_{_{2}}x_{_{2}i}+\dots+q_{m}x_{mi}| \leq \psi(|\mbf{q}|)\text{\quad{}for\quad}(1\leq
i\leq n),
\]
is satisfied for infinitely many $\mbf{q}\in \mb{Z} ^{m}\setminus\{0\}$.
Here and throughout $|\mbf{q}|$ will
denote the supremum norm of the vector $\mbf{q}$. Specifically,
$\left\vert \mbf{q}\right\vert =\max \left\{ \left\vert
q_{_{1}}\right\vert ,\left\vert q_{_{2}}\right\vert ,\dots,\left\vert
q_{_{m}}\right\vert \right\}$.
The system
$q_{_{1}}x_{_{1}i}+q_{_{2}}x_{_{2}i}+\dots+q_{m}x_{mi}$ of $n$ linear
forms in $m$ variables $q_{_{1}},q_{_{2}},\dots,q_{_{m}}$ will be
written more concisely as $\mbf{q}X$, where the matrix $X$ is
regarded as a point in $ \mb{R}^{mn}.$  It is easily verified that
$\psi$--approximability is not affected under translation by integer
vectors and we can therefore restrict attention to the  unit cube
$\mb{I}^{mn}:= [ -\frac{1}{2},\frac{1}{2}] ^{mn}$. The set of
$\psi$--approximable points in $\mb{I}^{mn} $ will be denoted by
$W_0( m,n;\psi)$; 
\[
W_0( m,n;\psi) :=\{ X\in \mb{I}^{mn}:|\mbf{q}X|<\psi(|\mbf{q}|)\text{\ for i.m.\ }\mbf{q}\in \mb{Z}^{m}\setminus \{\mbf{0}\}\},
\]
where `i.m.' means `infinitely many'.
In the case when $\psi(r)= r^{-\tau}$ for some $ (\tau>0)$ we shall write $W_0\left( m,n;\tau
\right)$ instead of $W_0\left( m,n;\psi \right)$.

It is worth relating the above to the set of $\psi$--well approximable matrices as is often studied in classical Diophantine approximation.
In such a setting studying the metric structure of the $\limsup$-set
\[
W( m,n;\psi)=\{ X\in \mb{I}^{mn}:\| \mbf{q}X\| <\psi(|\mbf{q}|)\text{ for i.m. \ }  \mbf{q}\in
\mb{Z}^{m}\setminus \{\mbf{0}\}\},
\]
where $\|x\|$ denotes the distance of $x$ to the nearest
integer vector, is a central problem and the theory is well
established, see for example \cite{geo} or \cite{BDV}. Probably the main result in this setting is the Khintchine-Groshev theorem which gives
an elegant answer to the question of the size of the $W(m,n;\psi)$. The result links the measure of the set to the convergence or otherwise
of a series that depends only on the approximating function and is the template for many results in the field of metric number theory.  It is clear
then that the set $\W$ is an analogue of $W(m,n; \psi)$
with $|\cdot|$ replacing $\|\cdot\|$. The aim of this paper is to obtain the complete metric theory for the set $W_0(m,n;\psi)$.

It is readily verified that $W_0( 1,n;\psi) = \{0\}$ as any $x=(x_1,x_2,\dots,x_n)\in{}W_0(1,n;\psi)$
must satisfy the inequality
$\left\vert qx_{j}\right\vert < \psi(q) $  infinitely often. As
$\psi(q)\to 0$ as $q\to\infty$ this is only possible if
$x_j=0$ for all $j=1,2,\dots,n.$ 
Thus when $m=1$ the set $W_0(1, n; \psi)$ is a singleton and must have both zero measure and dimension.
We will therefore assume that $m\geq 2.$

Before giving the main results of this paper we include a brief review of some of the work done previously on
the measure theoretic structure of  $W_0(m,n;\psi)$.

The first result is due to Dickinson
\cite{Dickinson}.
\begin{Dickinson}\label{hd}
When $\tau> \frac{m}{n}-1$ and $m\geq2$
\[
\dim(W_0(m,n;\tau))=(m-1)n+ \frac{m}{\tau+1},
\]
and when $0<\tau\leq \frac{m}{n}-1,$
\[
\dim(W_0(m,n;\tau))=mn.
\]
\end{Dickinson}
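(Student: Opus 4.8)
The plan is to split the argument into the two complementary ranges of $\tau$, treating the "full dimension" case first since it follows quickly from a covering/volume heuristic and the monotonicity of the sets. When $0<\tau\le \frac{m}{n}-1$, I would show that $W_0(m,n;\tau)$ actually has full Lebesgue measure in $\mb{I}^{mn}$ (or at least that its dimension is $mn$) by a direct Borel--Cantelli-type computation: for each fixed $\mbf{q}\in\mb{Z}^m\setminus\{0\}$ the set of $X\in\mb{I}^{mn}$ with $|\mbf{q}X|<\psi(|\mbf{q}|)$ is a union of $n$ "slabs" of width $\asymp \psi(|\mbf{q}|)/|\mbf{q}|$, hence has measure $\asymp \bigl(\psi(|\mbf{q}|)/|\mbf{q}|\bigr)^n$ per coordinate block, and summing $|\mbf{q}|^{m-1}\bigl(|\mbf{q}|^{-\tau-1}\bigr)^n$ over $\mbf{q}$ diverges precisely when $\tau\le \frac{m}{n}-1$; a quasi-independence argument (or an appeal to the divergence Khintchine--Groshev phenomenon that the paper is about to establish) then gives full measure and therefore dimension $mn$.

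For the main case $\tau>\frac{m}{n}-1$ the strategy is the standard two-sided attack. For the upper bound $\dim W_0(m,n;\tau)\le (m-1)n+\frac{m}{\tau+1}$, I would use the natural cover: for each $\mbf{q}$ with $|\mbf{q}|\asymp 2^k$ the contribution to $W_0(m,n;\tau)$ is contained in $\asymp 2^{k(m-1)}$ boxes (indexed by the integer parts of the linear forms and by the direction of $\mbf{q}$), each of which, in the $n$ coordinate blocks corresponding to the constrained directions, is a rectangle of sidelengths $\asymp 2^{-k(\tau+1)}$ in one direction and $\asymp 1$ in the remaining $m-1$ directions of that block. Covering each such thin rectangle efficiently by cubes of side $2^{-k(\tau+1)}$ requires $\asymp 2^{k(m-1)n}$ cubes, so the $s$-dimensional sum over the $k$-th level is $\asymp \sum_k 2^{km} 2^{k(m-1)n}\,2^{-k(\tau+1)s}$, which converges for every $s>(m-1)n+\frac{m}{\tau+1}$; since $X\in W_0$ means $X$ is covered at infinitely many levels $k$, this yields the Hausdorff-measure (hence dimension) upper bound.

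The lower bound $\dim W_0(m,n;\tau)\ge (m-1)n+\frac{m}{\tau+1}$ is where the real work lies, and I expect it to be the main obstacle. The cleanest route is a mass-transference / ubiquity argument: one first establishes that the "$(m-1)n$-dimensional" family of resonant affine subspaces $\{X:\mbf{q}X=\mbf{p}\}$ (fixing $\mbf{p}\in\mb{Z}^n$ so that these pass suitably close to the origin) is \emph{ubiquitous} in $\mb{I}^{mn}$ with respect to a function governed by $|\mbf{q}|^{-1}$ — this is exactly a Dirichlet-type pigeonhole statement and should follow from the $\psi=$const case — and then transfers full measure on the union of neighbourhoods of these subspaces down to the $\limsup$ set with shrinking neighbourhoods of radius $|\mbf{q}|^{-\tau-1}$. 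Concretely I would build a Cantor-type subset of $W_0(m,n;\tau)$ supported on nested unions of such thin neighbourhoods, control the local scaling (a factor $|\mbf{q}|^{-(\tau+1)}$ in $n$ "transverse" coordinate directions and essentially no contraction in the remaining $(m-1)n$ directions), and apply the standard mass distribution principle; the dimension of the resulting measure works out to $(m-1)n+\frac{m}{\tau+1}$, with the "$m$" in the numerator coming from the $m$ degrees of freedom in choosing $\mbf{q}$ against the growth rate $|\mbf{q}|^{\tau+1}$ of the contraction. The delicate points are verifying that enough resonant subspaces at scale $|\mbf{q}|\asymp 2^k$ actually intersect $\mb{I}^{mn}$ near the origin (one must choose $\mbf{p}$ of the correct size, $|\mbf{p}|\asymp|\mbf{q}|^{-\tau}\cdot|\mbf{q}| = |\mbf{q}|^{1-\tau}$, which is only meaningful because $\tau$ can exceed $1$ and forces $\mbf{p}=\mbf{0}$ in a range — this is precisely why the threshold $\frac{m}{n}-1$ appears), and checking the quasi-independence needed at each stage of the Cantor construction; I would handle the latter via a standard counting estimate on lattice points $\mbf{q}$ in a ball with prescribed residues.
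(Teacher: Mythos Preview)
There is a fundamental problem: the paper does not prove this statement. It is quoted as Dickinson's result from \cite{Dickinson}, and the very next sentence of the paper reads ``It turns out that Dickinson's original result is false when $m\leq n$.'' So you are attempting to prove a theorem that, in the regime $m\leq n$, is simply wrong. Concretely, the paper shows (at the start of \S\ref{proofThm2}) that every $X\in W_0(m,n;\psi)$ has linearly dependent columns in each $m\times m$ sub-block, forcing $W_0(m,n;\psi)\subset\Gamma$ where $\Gamma$ is a manifold of dimension $(m-1)(n+1)$. Hence $\dim W_0(m,n;\tau)\leq (m-1)(n+1)$ always, whereas Dickinson's formula $(m-1)n+\frac{m}{\tau+1}$ exceeds $(m-1)(n+1)$ as soon as $\tau<\frac{1}{m-1}$. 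The corrected statement is Corollary~\ref{dimWmleqn}.

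Your lower-bound machinery breaks at exactly this point. The ubiquity you intend to invoke rests on a Dirichlet-type input (Lemma~\ref{dir} in the paper): for $|\mbf{q}|\leq 2^t$ one has $|\mbf{q}X|<m(2^t)^{-m/n+1}$. When $m\leq n$ the exponent $-m/n+1$ is non-negative, so this bound does not decay and yields no usable ubiquity function; the estimate $|E(t)|_{mn}\to 0$ in the paper's proof explicitly requires $m>n$. Your Cantor/mass-distribution construction therefore cannot manufacture a measure of dimension $(m-1)n+\frac{m}{\tau+1}$ on $\mb{I}^{mn}$ when $m\leq n$, because the resonant neighbourhoods do not fill $\mb{I}^{mn}$ at all---they only fill the lower-dimensional $\Gamma$. (A smaller slip: in the $W_0$ setting there are no ``integer parts of the linear forms''; the resonant sets are $\{\mbf{q}X=\mbf{0}\}$, so your $\mbf{p}$ is always $\mbf{0}$.) For $m>n$ your outline does align with what the paper actually proves---the natural cover for the upper bound (Theorem~\ref{conv}) and ubiquity via Lemma~\ref{dir} and Theorem~\ref{BV} for the lower bound, leading to Corollary~\ref{dimmln}---but as a proof of the stated theorem the proposal cannot succeed, because the target is false.
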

It turns out that Dickinson's original result is false when $m
\leq n$.  The correct statement is given in Corollary $5$ which
is a consequence of Theorem \ref{Mumtaz2} proved below.
To the best of our knowledge the
only other result is due to Kemble \cite{kemble} who established a
Khintchine--Groshev type theorem for  $W_0(m,1;\psi)$ under various
conditions on the approximating function. We shall remove these
conditions and prove the precise analogue of  the
Khintchine--Groshev theorem for  $W_0(m,n;\psi)$. Finally, it is
worth mentioning that the set is not only of number theoretic
interest but appears naturally in operator theory, see
\cite{perturbation} for further details.

\emph{Notation.} To simplify notation the symbols $\ll$ and
$\gg$ will be used to indicate an inequality with an unspecified
positive multiplicative constant. If $a\ll b$ and $a\gg b$  we write
$a\asymp b$, and say that the quantities $a$ and $b$ are comparable.
For  a set $A$, $|A|_k$ will be taken to mean the $k-$dimensional Lebesgue
measure of the set $A$.

\section{Statement of Main Results}

The results of this paper depend crucially on the choice of $m$ and $n$. We
shall see that when $m>n$, the metric theory is `independent' and
for this particular case Dickinson's dimension result is correct.
When $m\leq n$ the measure results are dependent on the independent
case. Dickinson's result for this particular case is incorrect and we provide
the correct result.

In the following $\mcal{H}^{f}$ denotes
$f$-dimensional Hausdorff measure which will be defined fully in \S\ref{hm}. Given an
approximating function $\psi$ let $\Psi(r):=\frac{\psi(r)}{r}$.

\begin{theorem}\label{thm1}
Let $m>n$ and $\psi$ be an approximating function. Let $f$ be a
dimension function such that $r^{-mn}f(r)$ is monotonic and
$r^{-(m-1)n}f(r)$ is increasing. Then
\[
\mcal{H}^{f}\left(W_0\left( m,n;\psi \right) \right)=
\left\{\begin{array}{lll} 0&\mbox{if}& \sum \limits_{r=1}^{\infty}f(\Psi (r))\Psi(r)^{-(m-1)n}r^{m-1}<\infty, \\
    \mcal{H}^f(\mb{I}^{mn})&{if}& \sum \limits_{r=1}^{\infty}f(\Psi
    (r))\Psi(r)^{-(m-1)n}r^{m-1}=\infty.
\end{array}\right.
\]
\end{theorem}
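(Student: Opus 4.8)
The plan is to view $\W$ as the $\limsup$ set of neighbourhoods of a family of rational affine subspaces and to treat the two halves of the dichotomy by, respectively, a covering estimate and the ubiquity machinery. For $\mbf{q}\in\mb{Z}^m\setminus\{\mbf{0}\}$ put
\[
A_{\mbf{q}}:=\bigl\{X\in\mb{I}^{mn}:|\mbf{q}X|<\psi(|\mbf{q}|)\bigr\}.
\]
The inequality $|\mbf{q}X|<\psi(|\mbf{q}|)$ asserts that each column $\mbf{x}^{(i)}$ of $X$ lies within distance $\asymp\psi(|\mbf{q}|)/|\mbf{q}|=\Psi(|\mbf{q}|)$ of the hyperplane $\{\mbf{z}\in\mb{R}^m:\mbf{q}\cdot\mbf{z}=0\}$, so $A_{\mbf{q}}$ is comparable to the $\Psi(|\mbf{q}|)$-neighbourhood in $\mb{I}^{mn}$ of the plane
\[
L_{\mbf{q}}:=\bigl\{X\in\mb{I}^{mn}:\mbf{q}X=\mbf{0}\bigr\},
\]
an affine subspace of $\mb{R}^{mn}$ of dimension $(m-1)n$ and codimension $n$. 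Since $\W=\limsup_{\mbf{q}}A_{\mbf{q}}$, since $\Psi$ is decreasing with $\Psi(r)\to0$, and since $\#\{\mbf{q}:|\mbf{q}|=r\}\asymp r^{m-1}$, the problem falls squarely into the scope of the ubiquity framework applied to the system $\{L_{\mbf{q}}\}$.

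\emph{Convergence.} This half is routine. Each $A_{\mbf{q}}$, being a $\Psi(|\mbf{q}|)$-tube about an $(m-1)n$-dimensional plane meeting the cube, can be covered by $\ll\Psi(|\mbf{q}|)^{-(m-1)n}$ balls of radius $\asymp\Psi(|\mbf{q}|)$; the hypotheses that $r^{-(m-1)n}f(r)$ is increasing and $r^{-mn}f(r)$ is monotonic are exactly what make the Hausdorff--Cantelli lemma applicable at this scale, giving
\[
\mcal{H}^f(\W)\ll\sum_{\mbf{q}}\Psi(|\mbf{q}|)^{-(m-1)n}f(\Psi(|\mbf{q}|))\asymp\sum_{r=1}^{\infty}r^{m-1}\Psi(r)^{-(m-1)n}f(\Psi(r)),
\]
so $\mcal{H}^f(\W)=0$ whenever this series converges.

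\emph{Divergence.} This is the substantial half, and the plan is to apply the Ubiquity theorem. The one hypothesis to verify is that $\{L_{\mbf{q}}\}$ is a \emph{locally ubiquitous} system of planes in $\mb{I}^{mn}$ relative to $\rho(N)\asymp N^{-(m-n)/n}$; concretely, that for every ball $B\subset\mb{I}^{mn}$ and all large $N$, each point of $B$ lies within $\rho(N)$ of some $L_{\mbf{q}}$ with $0<|\mbf{q}|\le N$. This is a Dirichlet-type assertion, which I would prove by Minkowski's theorem on linear forms --- equivalently, by pigeonholing the $(N+1)^m$ points $\mbf{q}X$ ($\mbf{q}\in\{0,1,\dots,N\}^m$), which lie in a box of side $\le mN$, into fewer than $(N+1)^m$ congruent subcubes of side $\ll N^{1-m/n}$: two of those $\mbf{q}$ land in a common subcube, and their difference produces $\mbf{q}^{\ast}\in\mb{Z}^m\setminus\{\mbf{0}\}$ with $|\mbf{q}^{\ast}|\le N$ and $|\mbf{q}^{\ast}X|\ll N^{-(m-n)/n}$, whence $\mathrm{dist}(X,L_{\mbf{q}^{\ast}})\ll\rho(N)$. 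It is precisely here that the hypothesis $m>n$ is used, since only then does $\rho(N)\to0$. Being a family of affine subspaces, $\{L_{\mbf{q}}\}$ trivially satisfies the regularity conditions of the framework, and the monotonicity assumptions on $f$ in the statement are exactly those the Ubiquity theorem requires with ambient dimension $mn$ and resonant-set dimension $(m-1)n$; given the divergence of $\sum_{r\ge1}f(\Psi(r))\Psi(r)^{-(m-1)n}r^{m-1}$, it then yields $\mcal{H}^f(\W)=\mcal{H}^f(\mb{I}^{mn})$. (An alternative route is to first establish the full Lebesgue statement $|\W|_{mn}=1$ from a zero--one law and then upgrade it via a mass transference principle for systems of linear forms.) I expect the main obstacle to be the bookkeeping that matches the series delivered by the Ubiquity theorem with the one above --- that is, aligning the size of $\mbf{q}$ with the dyadic block appearing in the ubiquity condition --- together with checking that the Minkowski argument localises to an arbitrary sub-ball of $\mb{I}^{mn}$ with constants independent of the ball.
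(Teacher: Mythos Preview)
Your overall architecture matches the paper's: a Hausdorff--Cantelli cover for convergence and the ubiquity theorem of Beresnevich--Velani for divergence, with the resonant sets $L_{\mbf{q}}=\{X:\mbf{q}X=\mbf{0}\}$ of dimension $(m-1)n$. (One small remark: the paper explicitly observes that the monotonicity hypotheses on $f$ are \emph{not} needed for the convergence half, so your invocation of them there is unnecessary.)

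There is, however, a genuine gap in the divergence half: your ubiquity function $\rho(N)\asymp N^{-(m-n)/n}$ is too coarse and will not recover the correct series. From Dirichlet/Minkowski you get $|\mbf{q}^{\ast}X|\ll N^{1-m/n}$ with $1\le|\mbf{q}^{\ast}|\le N$; since $\mathrm{dist}(X,L_{\mbf{q}^{\ast}})\asymp|\mbf{q}^{\ast}X|/|\mbf{q}^{\ast}|$, the naive bound $|\mbf{q}^{\ast}|\ge1$ gives only $\rho(N)\asymp N^{1-m/n}$. Plugging this into the ubiquity sum with $\delta-\gamma=n$ yields $\rho(k^t)^{-n}\asymp k^{t(m-n)}$ and hence a series comparable to $\sum_r f(\Psi(r))\Psi(r)^{-(m-1)n}r^{m-n-1}$, which is a factor of $r^{n}$ smaller than the target $\sum_r f(\Psi(r))\Psi(r)^{-(m-1)n}r^{m-1}$; divergence of the latter does not imply divergence of the former. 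This is not bookkeeping but a missing idea: one must show that for Lebesgue-almost every $X$ the Dirichlet vector $\mbf{q}^{\ast}$ can be taken with $|\mbf{q}^{\ast}|$ nearly as large as $N$. The paper does this by introducing a slowly growing function $\omega$, defining the exceptional set $E(t)=\{X:|\mbf{q}^{\ast}|<2^{t}/\omega(t)\}$, and proving $|E(t)|_{mn}\ll\omega(t)^{-(m-n)}\to0$ via a direct volume count (here is the second use of $m>n$). Off $E(t)$ one has $|\mbf{q}^{\ast}|\ge 2^{t}/\omega(t)$, giving the much sharper $\rho(t)\asymp (2^{t})^{-m/n}\omega(t)$, and since $|\Delta(\rho,t)|_{mn}\to1$ local ubiquity follows without any separate localisation argument. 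The residual $\omega$ in the resulting sum is then removed by choosing $\omega$ as a step function adapted to a decomposition of the divergent series into blocks of mass at least $1$.
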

The requirement that $r^{-mn}f(r)$ be monotonic is a natural and not particularly restrictive
condition. Note that if the dimension
function $f$ is such that $r^{-mn}f(r)\to \infty$ as $r\to 0$ then
$\mcal{H}^f(\mb{I}^{mn})=\infty$ and Theorem \ref{thm1} is the
analogue of the classical result of Jarn\'{\i}k (see \cite{Jarnik}).

Theorem \ref{thm1} implies analogues of both the Lebesgue and Hausdorff measure results
familiar from classical Diophantine approximation. In the case when $f(r) := r^{mn}$
the Hausdorff measure $\mcal{H}^f$  is simply standard Lebesgue measure supported on
$\mb{I}^{mn}$ and the result is the natural analogue of the
Khintchine--Groshev theorem for $ W_0\left( m,n;\psi\right )$.
\begin{cor}\label{cor1}
Let $m>n$ and $\psi $
be an approximating function, then
\begin{equation*}
|W_0\left( m,n;\psi \right)|_{mn} =
\begin{cases}
    0\hspace{1cm} if\hspace{1cm} \sum \limits_{r=1}^{\infty}\psi (r)^{n}r^{m-n-1}<\infty, \\
    1\hspace{1cm} if\hspace{1cm} \sum \limits_{r=1}^{\infty}\psi
    (r)^{n}r^{m-n-1}=\infty.
\end{cases}
\end{equation*}
\end{cor}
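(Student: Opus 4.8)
The plan is to obtain Corollary~\ref{cor1} as an immediate specialization of Theorem~\ref{thm1} to the dimension function $f(r) = r^{mn}$, so that essentially no new work is needed: all of the analytic difficulty has already been absorbed into the proof of Theorem~\ref{thm1}. First I would verify that $f(r) = r^{mn}$ is admissible for Theorem~\ref{thm1}. The function $r \mapsto r^{-mn}f(r) = 1$ is constant and hence monotonic, while $r \mapsto r^{-(m-1)n}f(r) = r^{n}$ is increasing because $n \geq 1$. Thus Theorem~\ref{thm1} applies with this $f$.

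Next I would invoke the standard fact that for $f(r) = r^{mn}$ the Hausdorff measure $\mcal{H}^{f} = \mcal{H}^{mn}$ is a positive constant multiple of $mn$-dimensional Lebesgue measure on $\mb{R}^{mn}$. Consequently $\mcal{H}^{f}(\W) = 0$ if and only if $|\W|_{mn} = 0$; and since $0 < \mcal{H}^{f}(\mb{I}^{mn}) < \infty$, the divergence conclusion $\mcal{H}^{f}(\W) = \mcal{H}^{f}(\mb{I}^{mn})$ of Theorem~\ref{thm1} is equivalent to $|\W|_{mn} = |\mb{I}^{mn}|_{mn} = 1$, that is, to full Lebesgue measure.

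It then only remains to simplify the series in Theorem~\ref{thm1}. Writing $\Psi(r) = \psi(r)/r$ and using $f(\Psi(r)) = \Psi(r)^{mn}$, the general term equals
\[
f(\Psi(r))\,\Psi(r)^{-(m-1)n}\,r^{m-1} = \Psi(r)^{n}\,r^{m-1} = \psi(r)^{n}\,r^{-n}\,r^{m-1} = \psi(r)^{n}\,r^{m-n-1},
\]
so that $\sum_{r=1}^{\infty} f(\Psi(r))\,\Psi(r)^{-(m-1)n}\,r^{m-1} = \sum_{r=1}^{\infty} \psi(r)^{n}\,r^{m-n-1}$. Substituting this identity into the two cases of Theorem~\ref{thm1} yields precisely the convergence/divergence dichotomy asserted in Corollary~\ref{cor1}.

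I do not expect any genuine obstacle at this stage: the only minor point of care is the dimensional constant relating $\mcal{H}^{mn}$ to $|\cdot|_{mn}$, which plays no role in a zero--full dichotomy. The real difficulty --- notably the divergence half, which presumably rests on a suitable ubiquity or quasi-independence argument --- sits entirely inside the proof of Theorem~\ref{thm1}, which we assume here.
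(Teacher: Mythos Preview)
Your proposal is correct and matches the paper's approach exactly: the paper states that Corollary~\ref{cor1} follows from Theorem~\ref{thm1} by taking $f(r)=r^{mn}$, and you have supplied precisely the verification of the hypotheses and the series simplification that the paper leaves implicit.
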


If we now set  $f:r\to
r^s(s>0)$ then Theorem \ref{thm1} reduces to
the following $s$-dimensional Hausdorff measure statement which is more
discriminating then the Hausdorff dimension result of
Dickinson.
\begin{cor}\label{smesmgn}Let $m>n$ and $\psi$ be an approximating function. Let $s$ be  such that
$(m-1)n<s\leq mn.$ Then,
\[
\mcal{H}^{s}\left( {W_0}\left( m,n;\psi \right) \right)=
\left\{\begin{array}{lll} 0&\mbox{if}& \sum \limits_{r=1}^{\infty}\Psi(r)^{s-(m-1)n}r^{m-1}<\infty , \\
    \mcal{H}^s(\mb{I}^{mn})&{if}& \sum
    \limits_{r=1}^{\infty}\Psi(r)^{s-(m-1)n}r^{m-1}=\infty.
\end{array}\right.
\]
\end{cor}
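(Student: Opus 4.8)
The plan is to derive this directly from Theorem \ref{thm1} by specialising the dimension function to $f(r) := r^{s}$ and checking that the hypotheses of the theorem are satisfied in the stated range of $s$. First I would observe that with $f(r) = r^{s}$ we have $r^{-mn}f(r) = r^{s-mn}$, which is monotonic (decreasing since $s \le mn$, or constant if $s = mn$), so the first hypothesis of Theorem \ref{thm1} holds. Next, $r^{-(m-1)n}f(r) = r^{s-(m-1)n}$, and the assumption $s > (m-1)n$ forces the exponent $s-(m-1)n$ to be strictly positive, so this function is increasing; this is precisely where the lower bound on $s$ is used. Hence Theorem \ref{thm1} applies verbatim.

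It then remains only to simplify the convergence/divergence series. Substituting $f(r) = r^{s}$ into $f(\Psi(r))\,\Psi(r)^{-(m-1)n} r^{m-1}$ gives $\Psi(r)^{s}\,\Psi(r)^{-(m-1)n} r^{m-1} = \Psi(r)^{s-(m-1)n} r^{m-1}$, which is exactly the series appearing in the statement of Corollary \ref{smesmgn}. Since $\mathcal{H}^{f} = \mathcal{H}^{s}$ for this choice of $f$, the dichotomy of Theorem \ref{thm1} transcribes directly into the claimed dichotomy for $\mathcal{H}^{s}(W_0(m,n;\psi))$, with the divergence case yielding $\mathcal{H}^{s}(\mb{I}^{mn})$ as before.

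There is essentially no obstacle here: the corollary is a pure specialisation, and the only point requiring a word of care is the verification that $s > (m-1)n$ is exactly what makes $r^{-(m-1)n}f(r)$ increasing (so the hypothesis cannot be weakened within this approach), while $s \le mn$ guarantees $r^{-mn}f(r)$ is monotonic. One might additionally remark, as the surrounding text does, that the statement is strictly finer than Dickinson's dimension result: taking $s$ to be the critical exponent at which the series transitions recovers $\dim W_0(m,n;\psi)$, but the Hausdorff measure statement further distinguishes the behaviour exactly at that exponent.
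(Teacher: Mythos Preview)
Your proposal is correct and matches the paper's own treatment: the paper simply states that setting $f(r)=r^{s}$ in Theorem~\ref{thm1} yields the corollary, and your verification of the monotonicity hypotheses and simplification of the series is exactly the routine check this entails.
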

Under the conditions of Corollary~\ref{smesmgn} it follows from
the definition of Hausdorff dimension that
\[
\dim\left(W_0\left( m,n;\psi \right) \right)=\inf \left\{s:\sum
\limits_{r=1}^{\infty}\Psi(r)^{s-(m-1)n}r^{m-1}<\infty \right\},
\]
and in particular the following
dimension result for $W_0(m,n;\tau)$ holds.
\begin{cor}\label{dimmln} Let $m>n$ and $\tau>\frac{m}{n}-1$ then
\[
\dim\left(W_0\left( m,n;\tau \right)
\right)=(m-1)n+\frac{m}{\tau+1}.
\]
\end{cor}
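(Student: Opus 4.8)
\textit{Proof proposal.} The plan is to deduce the statement directly from Corollary~\ref{smesmgn} together with the dimension formula displayed immediately above it, namely $\dim(W_0(m,n;\psi)) = \inf\{s : \sum_{r=1}^{\infty}\Psi(r)^{s-(m-1)n}r^{m-1}<\infty\}$. First I would substitute $\psi(r)=r^{-\tau}$, so that $\Psi(r)=\psi(r)/r=r^{-(\tau+1)}$, and rewrite the relevant series as the single power series $\sum_{r}r^{\,m-1-(\tau+1)(s-(m-1)n)}$. This converges precisely when its exponent is strictly less than $-1$, i.e. when $(\tau+1)(s-(m-1)n)>m$, equivalently $s>(m-1)n+\frac{m}{\tau+1}$. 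Hence the infimum over those $s$ for which the series converges equals $(m-1)n+\frac{m}{\tau+1}$, which is the claimed value of the dimension.

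Before invoking the formula I would verify that the critical exponent $s_0:=(m-1)n+\frac{m}{\tau+1}$ lies in the admissible range $(m-1)n<s\le mn$ required by Corollary~\ref{smesmgn}, so that the $\inf$-characterisation is legitimate on an open neighbourhood of $s_0$. The lower bound is immediate since $\tau>0$ forces $\frac{m}{\tau+1}>0$; the upper bound uses the hypothesis $\tau>\frac{m}{n}-1$, which gives $\tau+1>\frac{m}{n}$, hence $\frac{m}{\tau+1}<n$ and therefore $s_0<(m-1)n+n=mn$. For $s$ slightly above $s_0$ (still in the range) the series converges, so $\mcal{H}^{s}(W_0(m,n;\tau))=0$; for $s$ slightly below $s_0$ the series diverges, so $\mcal{H}^{s}(W_0(m,n;\tau))=\mcal{H}^{s}(\mb{I}^{mn})>0$ because $s<mn$. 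These two facts together pin the Hausdorff dimension down to $s_0$.

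There is essentially no obstacle here: the corollary is a routine specialisation of Corollary~\ref{smesmgn}, and the only point needing a line of care is the range check above, which ensures Corollary~\ref{smesmgn} (hence the displayed dimension formula) genuinely applies at all exponents relevant to computing the infimum. I would also remark in passing that the monotonicity hypotheses on the dimension function in Theorem~\ref{thm1} are automatically met by $f(r)=r^{s}$ for $s$ in the stated range, so no further verification is needed to bring Corollary~\ref{smesmgn} to bear.
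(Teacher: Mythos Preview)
Your proposal is correct and follows exactly the route indicated in the paper: the paper does not give a separate proof of this corollary but simply remarks that it follows from the displayed infimum formula (itself a consequence of Corollary~\ref{smesmgn}) upon specialising $\psi(r)=r^{-\tau}$, which is precisely what you carry out. Your additional range check on $s_0$ and the verification of the monotonicity hypotheses for $f(r)=r^s$ are useful pieces of care that the paper leaves implicit.
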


Theorem~\ref{thm1} establishes the metric theory for $W_0\left(
m,n;\psi \right) $ when $m>n.$ 
For the cases when $m\leq{}n$ the statement of Theorem~\ref{thm1} changes somewhat. 
The sum which determines the $f$--measure remains the same
but the conditions on the dimension functions are different. 
This is due to the fact that set $W(m,n;\psi)$ can be shown to lie in a manifold $\Gamma\subset\mathbb{R}^{mn}$ of dimension $(m-1)(n+1)$, a fact we prove later in \S\ref{proofThm2}.
In light of this remark, an upper bound for
$\dim{}\W$ follows immediately. More specifically,
\[
\dim W_0(m,n;\psi)\leq (m-1)(n+1).
\]
 
\begin{theorem}\label{Mumtaz2}
Let $m\leq n$ and  $\psi$ be an approximating function. Let $f$ and
$g$ be dimension functions with $g(r)=r^{-(n-m+1)(m-1)}f(r)$. Assume that
$r^{-(m-1)(n+1)}f(r)$ is monotonic and $r^{-(m-1)n}f(r)$
increasing. Then $\mcal{H}^{f}(W_0(m,n;\psi)=0$ if 
\[
	\sum_{r=1}^\infty{}f(\Psi(r))\Psi(r)^{-(m-1)n}r^{m-1}<\infty.
\]

If
\[
		\sum_{r=1}^\infty{}f(\Psi(r))\Psi(r)^{-(m-1)n}r^{m-1}=\infty,
\]
then 
\[
	\mcal{H}^{f}(W_0(m,n;\psi) = 
	    \begin{cases}
			\infty & \text{\quad{}if\quad{}}  r^{-(m-1)(n+1)}f(r)\to\infty \text{ as } r\to{}0, \\
			\mathcal{H}^f(\Gamma) & \text{\quad{}if\quad{}} r^{-(m-1)(n+1)}f(r)\to{}C \text{ as } r\to{}0,
	    \end{cases}                          
\]
where $C>0$ is some fixed constant.  
\end{theorem}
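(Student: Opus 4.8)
The plan is to reduce Theorem~\ref{Mumtaz2} to Theorem~\ref{thm1} by \emph{slicing} the set $W_0(m,n;\psi)$ along the fibres of the manifold $\Gamma$. First I would establish, as promised in \S\ref{proofThm2}, that $W_0(m,n;\psi)\subseteq\Gamma$, where $\Gamma\subset\mathbb{R}^{mn}$ is an affine submanifold of dimension $(m-1)(n+1)$. The point is that $X\in W_0(m,n;\psi)$ forces, for infinitely many $\mbf{q}$, the system $|\mbf{q}X|<\psi(|\mbf{q}|)$; since $m\le n$, the $n$ column vectors of $X$ lie in $\mathbb{R}^m$ and hence are linearly dependent, and more precisely for each such $X$ the columns must (in the limit) satisfy the $m-n$ (or rather $n-m+1$ per column, after fixing an $(m-1)$-dimensional "base") linear relations that cut out $\Gamma$. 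I would make this precise by showing that $\Gamma$ is a bundle: over a "base" copy of $\mathbb{I}^{(m-1)n}$ (the first $m-1$ rows, say, of $X$) and an additional $(m-1)$ parameters, the last row is determined, so that $\Gamma$ fibres over $\mathbb{I}^{(m-1)n}$ (roughly the set where the independent theory lives) with fibres that are themselves $(m-1)$-dimensional affine pieces. The key structural fact is that, restricted to $\Gamma$, the set $W_0(m,n;\psi)$ looks — fibrewise — exactly like a copy of the $m>n$ situation, i.e. like $W_0(m',n';\psi)$ for appropriate smaller parameters with $m'>n'$, whose measure is governed precisely by the series in Theorem~\ref{thm1}. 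This explains why the determining series is unchanged while the dimension-function conditions shift: the exponent $(m-1)n$ of the "full" ambient approximation in Theorem~\ref{thm1} is replaced here by recording separately the $(m-1)(n+1)$-dimensional ambient $\Gamma$ and the redundant $(n-m+1)(m-1)$ directions, which is exactly the relation $g(r)=r^{-(n-m+1)(m-1)}f(r)$.

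Second, with $\Gamma$ and the fibration in hand, I would invoke the slicing lemma (Lemma (slicing lemma) in the preamble) to pass between $\mathcal{H}^f$ of $W_0(m,n;\psi)$ inside $\Gamma$ and $\mathcal{H}^g$ of the slices, where $g(r)=r^{-(n-m+1)(m-1)}f(r)$ is chosen precisely so that $g$ is the correct dimension function on the $(m-1)n$-dimensional base (note $(m-1)(n+1)-(m-1)=(m-1)n$, and the codimension of the base in $\Gamma$ is $m-1$, while $(n-m+1)(m-1)$ is the dimension discarded in going from the naive ambient $\mathbb{R}^{mn}$ down to $\Gamma$ — I would double-check these bookkeeping identities carefully, as this is where errors creep in). The hypotheses "$r^{-(m-1)(n+1)}f(r)$ monotonic" and "$r^{-(m-1)n}f(r)$ increasing" are exactly the hypotheses needed to apply the slicing lemma in both directions and to guarantee that $g$ satisfies the hypotheses of Theorem~\ref{thm1} (with $m>n$ replaced by the reduced pair). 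Concretely: for the \textbf{convergence case}, a slice of $W_0(m,n;\psi)$ is contained in (a bi-Lipschitz copy of) the corresponding $m>n$-type set whose $g$-measure is zero by Theorem~\ref{thm1} (the series there being the same series as the one assumed convergent here, up to the substitution $f\leftrightarrow g$), so Fubini/slicing for Hausdorff measures gives $\mathcal{H}^f(W_0(m,n;\psi))=0$. For the \textbf{divergence case}, Theorem~\ref{thm1} gives each slice full measure $\mathcal{H}^g(\text{base})$, and again slicing upgrades this to $\mathcal{H}^f(W_0(m,n;\psi))=\mathcal{H}^f(\Gamma)$ when $r^{-(m-1)(n+1)}f(r)\to C>0$ (so $\mathcal{H}^f(\Gamma)<\infty$ and the measure is genuinely the measure of the manifold), and to $\mathcal{H}^f(W_0(m,n;\psi))=\infty$ when $r^{-(m-1)(n+1)}f(r)\to\infty$ (so $\mathcal{H}^f(\Gamma)=\infty$ already).

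Third, I would deal with a subtlety the plan above glosses over: $W_0(m,n;\psi)$ is not literally a product set over $\Gamma$, so "fibrewise it's a copy of the independent case" needs justification. The resolution is that along $\Gamma$ the $n$ linear forms $\mbf{q}X$ collapse (because the columns become dependent in the directions cutting out $\Gamma$) to effectively $m-1$ independent linear forms in the $m$ variables $q_1,\dots,q_m$ — i.e. on $\Gamma$ the problem is genuinely $W_0(m,m-1;\psi)$-like, and $m>m-1$, so Theorem~\ref{thm1} applies to each fibre. I would verify that the natural parametrisation of $\Gamma$ is bi-Lipschitz onto $\mathbb{I}^{(m-1)(n+1)}$ so that Hausdorff measures transfer with comparable constants and the slicing lemma applies cleanly; since $\Gamma$ is affine (or a finite union of affine pieces, over the countably many choices of which columns form a basis) this is routine but must be stated.

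The main obstacle I anticipate is the divergence case, specifically showing that full measure on \emph{almost every} fibre genuinely integrates up to $\mathcal{H}^f(\Gamma)$ (resp.\ $\infty$) for the \emph{whole} $\limsup$ set rather than merely for a measurable selection of slices — one must be careful that the "exceptional" slices (where Theorem~\ref{thm1}'s divergence series might behave differently, or where the bi-Lipschitz constants degenerate near the boundary of $\mathbb{I}^{mn}$) form a null set for $\mathcal{H}^g$ on the base, and that the slicing lemma's hypotheses ($r^{-(m-1)n}f(r)$ increasing, hence $r^{-(m-1)n}g(r)\cdot r^{(n-m+1)(m-1)}\cdot r^{-(n-m+1)(m-1)}$ bookkeeping works out) are exactly met. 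A secondary obstacle is handling the countable decomposition of $\Gamma$ according to which $(m-1)\times(m-1)$ submatrix is invertible: one must ensure the slicing argument is uniform enough across these pieces that the measure estimates patch together, which should follow from countable subadditivity in the convergence case and from the fact that a single full-measure piece already suffices in the divergence case.
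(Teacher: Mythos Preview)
Your overall architecture --- reduce to Theorem~\ref{thm1} on $W_0(m,m-1;\psi)$ via the fibration of $\Gamma$ and the slicing lemma --- is the same as the paper's, and your identification of the reduced problem as $W_0(m,m-1;\psi)$ (so $m>m-1$ and Theorem~\ref{thm1} applies) is exactly right. However, two of your three sub-cases invoke slicing in directions it does not go.

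\textbf{Convergence.} The slicing lemma (Lemma~\ref{slicing}) is a \emph{lower} bound: infinite $\mcal{H}^g$ on many slices forces infinite $\mcal{H}^f$ on the whole. It says nothing in the direction ``all slices have $\mcal{H}^g$-measure zero $\Rightarrow$ $\mcal{H}^f(B)=0$'', and there is no generally valid Fubini-type upper bound for Hausdorff measures that would rescue this. The paper does not use slicing here at all: the convergence half is Theorem~\ref{conv}, a direct covering argument on the natural $\limsup$ cover of $W_0(m,n;\psi)$, valid for all $m,n\ge 2$ with no hypotheses on $f$ beyond being a dimension function. You should drop the slicing plan for convergence and use that covering argument instead.

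\textbf{Divergence, finite case.} When $r^{-(m-1)(n+1)}f(r)\to C>0$, the conclusion of the slicing lemma is again $\mcal{H}^f=\infty$, not ``$\mcal{H}^f=\mcal{H}^f(\Gamma)$''; slicing cannot deliver a finite full-measure statement. The paper handles this case separately: since $\mcal{H}^f$ is then comparable to $(m-1)(n+1)$-dimensional Lebesgue measure, one uses ordinary Fubini to get $|W_0(m,m-1;\psi)\times\mb{I}^{(n-m+1)(m-1)}|_{(m-1)(n+1)}=1$ from Corollary~\ref{cor1}, and then a bi-Lipschitz ``zero--full'' transfer (Lemma~\ref{fulllemma}) through local charts to conclude that the image is full in $\Gamma$.

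\textbf{Divergence, infinite case.} Here your plan is essentially the paper's. One refinement worth noting: rather than slicing $W_0(m,n;\psi)$ directly inside $\Gamma$, the paper constructs an explicit bi-Lipschitz embedding $\eta:W_0(m,m-1;\psi)\times\mb{I}^{(n-m+1)(m-1)}\to A\subseteq W_0(m,n;c\psi)$ (with $c=\max(\tfrac{m-1}{2},1)$), applies the slicing lemma to the \emph{product} --- where the slices are literally $W_0(m,m-1;\psi)$ --- transfers to $A$ via bi-Lipschitz invariance, and finally removes the constant $c$ using the monotonicity of $r^{-(m-1)(n+1)}f(r)$. This sidesteps the issue you flagged about $W_0(m,n;\psi)$ not being a literal product.
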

It is worth noting that for dimension functions $f$ such that $r^{-(m-1)(n+1)}f(r)\to{}C>0$ as $r\to{}0$ the measure $\mathcal{H}^f$ 
is comparable to standard $(m-1)(n+1)$-dimensional Lebesgue measure and 
in the case when $f(r)=r^{(m-1)(n+1)}$, we obtain the following analogue of the 
Khintchine-Groshev theorem.
\begin{cor}\label{kgmleqn}
  Let $m\leq n$ and $\psi$ be an approximating function and assume that the conditions of
Theorem~\ref{Mumtaz2} hold for the dimension function $f(r):=r^{(m-1)(n+1)}$. Then
\[
  \mu(W_0 (m,n;\psi))=\begin{cases}
    0\hspace{1cm} if\hspace{1cm} \sum \limits_{r=1}^{\infty}\psi (r)^{m-1}<\infty, \\
    1\hspace{1cm} if\hspace{1cm} \sum \limits_{r=1}^{\infty}\psi (r)^{m-1}=\infty,
	\end{cases}
\]
where $\mu$ is the normalised measure on the manifold $\Gamma$.
\end{cor}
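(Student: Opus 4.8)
The plan is to derive this as a direct specialisation of Theorem~\ref{Mumtaz2} with the dimension function $f(r)=r^{(m-1)(n+1)}$, and then translate the resulting Hausdorff-measure dichotomy into a statement about the normalised measure $\mu$ on $\Gamma$. First I would check that this $f$ satisfies the hypotheses of Theorem~\ref{Mumtaz2}: the function $r^{-(m-1)(n+1)}f(r)\equiv 1$ is (trivially) monotonic and tends to the constant $C=1$ as $r\to 0$, while $r^{-(m-1)n}f(r)=r^{(m-1)}$ is increasing since $m\geq 2$. Moreover, for this $f$ the partner function is $g(r)=r^{-(n-m+1)(m-1)}f(r)=r^{(m-1)n-(m-1)n+(m-1)}$; one records that all the structural conditions needed to invoke the theorem hold. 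Hence Theorem~\ref{Mumtaz2} applies and, because we are in the borderline case $r^{-(m-1)(n+1)}f(r)\to C>0$, the divergence case yields $\mcal{H}^f(W_0(m,n;\psi))=\mcal{H}^f(\Gamma)$.

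Next I would simplify the convergence/divergence series. With $\Psi(r)=\psi(r)/r$ and $f(r)=r^{(m-1)(n+1)}$,
\[
f(\Psi(r))\,\Psi(r)^{-(m-1)n}r^{m-1}
=\Psi(r)^{(m-1)(n+1)-(m-1)n}\,r^{m-1}
=\Psi(r)^{m-1}r^{m-1}=\psi(r)^{m-1},
\]
so the series in Theorem~\ref{Mumtaz2} becomes exactly $\sum_{r=1}^\infty\psi(r)^{m-1}$, as claimed. Thus convergence of $\sum\psi(r)^{m-1}$ gives $\mcal{H}^f(W_0(m,n;\psi))=0$, while divergence gives $\mcal{H}^f(W_0(m,n;\psi))=\mcal{H}^f(\Gamma)$.

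It remains to pass from $\mcal{H}^f$ to the normalised measure $\mu$ on $\Gamma$. Since $f(r)=r^{(m-1)(n+1)}$ and $\Gamma$ is a $(m-1)(n+1)$-dimensional manifold (shown in \S\ref{proofThm2} to be, up to a bi-Lipschitz change of coordinates, a cube of that dimension), the restriction of $\mcal{H}^f$ to $\Gamma$ is comparable to $(m-1)(n+1)$-dimensional Lebesgue measure on $\Gamma$, and in particular $0<\mcal{H}^f(\Gamma)<\infty$. Defining $\mu:=\mcal{H}^f\!\restriction_\Gamma/\mcal{H}^f(\Gamma)$ we then get $\mu(W_0(m,n;\psi))=0$ in the convergence case and $\mu(W_0(m,n;\psi))=1$ in the divergence case (using $W_0(m,n;\psi)\subset\Gamma$, which is the containment established prior to Theorem~\ref{Mumtaz2}). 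The only mild subtlety — and the step I would be most careful about — is the comparability constant relating $\mcal{H}^f$ and Lebesgue measure on $\Gamma$: one must invoke the explicit parametrisation of $\Gamma$ from \S\ref{proofThm2} to see that the Jacobian of the coordinate map is bounded above and below, so that the two measures are genuinely comparable and the normalisation is legitimate; everything else is a routine substitution into Theorem~\ref{Mumtaz2}.
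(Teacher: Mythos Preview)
Your proposal is correct and follows exactly the route the paper intends: the corollary is stated immediately after the remark that for $f(r)=r^{(m-1)(n+1)}$ the measure $\mcal{H}^f$ is comparable to $(m-1)(n+1)$--dimensional Lebesgue measure, and the paper gives no separate proof beyond this specialisation of Theorem~\ref{Mumtaz2}. One small slip: your displayed computation of $g$ is garbled --- in fact $g(r)=r^{m(m-1)}$ --- but since you do not use the explicit form of $g$ afterwards this does not affect the argument.
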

As above, if we set $f(r)=r^s$ we obtain the $m\leq n$ analogue
of Corollary~\ref{smesmgn}.
\begin{cor}\label{smesln}Let $m\leq n$  and $\psi$ be an approximating function. Let $s$ be such that $(m-1)n<s\leq
(m-1)(n+1)$ and let  $g:r\to r^{s-(n-(m-1))(m-1)}$ be a dimension
function.  Then,
\[
  \mcal{H}^{s}(W_0(m,n;\psi))=\left\{\begin{array}{lll} 0&\mbox{if}& \sum \limits_{r=1}^{\infty}\Psi(r)^{s-(m-1)n}r^{m-1}<\infty, \\
    \mcal{H}^s(\Gamma)&{if}& \sum \limits_{r=1}^{\infty}\Psi(r)^{s-(m-1)n}r^{m-1}=\infty.
\end{array}\right.
\]
\end{cor}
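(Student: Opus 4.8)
The plan is to obtain Corollary~\ref{smesln} as a direct specialisation of Theorem~\ref{Mumtaz2} to the dimension function $f(r):=r^{s}$, with associated function $g(r)=r^{-(n-m+1)(m-1)}f(r)=r^{\,s-(n-m+1)(m-1)}$ (this is exactly the $g$ named in the corollary). The only real work is to check that this choice of $f$ and $g$ satisfies the hypotheses of Theorem~\ref{Mumtaz2} on the stated range $(m-1)n<s\leq (m-1)(n+1)$, and then to translate the two alternatives in the divergence part of that theorem into the single conclusion of the corollary.

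\emph{Verification of the hypotheses.} First, $r^{-(m-1)(n+1)}f(r)=r^{\,s-(m-1)(n+1)}$ is monotonic (non-increasing), since $s\leq (m-1)(n+1)$ makes the exponent $\leq 0$. Second, $r^{-(m-1)n}f(r)=r^{\,s-(m-1)n}$ is increasing, since $s>(m-1)n$. Third, $g(r)=r^{\,s-(n-m+1)(m-1)}$ is a genuine dimension function: because $m\geq 2$ one has $(n-m+1)(m-1)<(m-1)n<s$, so the exponent of $g$ is strictly positive. Finally, with $f(r)=r^{s}$ the series governing Theorem~\ref{Mumtaz2} becomes
\[
\sum_{r=1}^{\infty}f(\Psi(r))\Psi(r)^{-(m-1)n}r^{m-1}=\sum_{r=1}^{\infty}\Psi(r)^{\,s-(m-1)n}r^{m-1},
\]
which is precisely the series appearing in the corollary.

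\emph{Conclusion.} If $\sum_{r}\Psi(r)^{s-(m-1)n}r^{m-1}<\infty$, Theorem~\ref{Mumtaz2} gives $\mathcal{H}^{f}(W_0(m,n;\psi))=\mathcal{H}^{s}(W_0(m,n;\psi))=0$. Suppose instead the series diverges. If $s<(m-1)(n+1)$, then $r^{-(m-1)(n+1)}f(r)=r^{\,s-(m-1)(n+1)}\to\infty$ as $r\to 0$, so Theorem~\ref{Mumtaz2} gives $\mathcal{H}^{s}(W_0(m,n;\psi))=\infty$; but since $\Gamma$ is a manifold of dimension $(m-1)(n+1)>s$, the standard monotonicity property of Hausdorff measure yields $\mathcal{H}^{s}(\Gamma)=\infty$ as well, so $\mathcal{H}^{s}(W_0(m,n;\psi))=\mathcal{H}^{s}(\Gamma)$. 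If instead $s=(m-1)(n+1)$, then $r^{-(m-1)(n+1)}f(r)\equiv 1\to 1$ as $r\to 0$, and Theorem~\ref{Mumtaz2} directly gives $\mathcal{H}^{s}(W_0(m,n;\psi))=\mathcal{H}^{s}(\Gamma)$. In all cases $\mathcal{H}^{s}(W_0(m,n;\psi))=\mathcal{H}^{s}(\Gamma)$, which is the claimed dichotomy.

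\emph{Main obstacle.} There is no substantive obstacle; the result is a pure specialisation of Theorem~\ref{Mumtaz2}. The only points needing a moment's care are confirming that $g$ is a legitimate dimension function (which is exactly where the standing assumption $m\geq 2$ is used) and merging the ``$\infty$'' and ``$\mathcal{H}^{f}(\Gamma)$'' alternatives of the theorem, which coincide here because $\mathcal{H}^{s}(\Gamma)=\infty$ whenever $s$ lies strictly below $\dim\Gamma=(m-1)(n+1)$.
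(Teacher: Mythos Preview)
Your proposal is correct and follows exactly the approach the paper indicates: the paper simply states that setting $f(r)=r^s$ in Theorem~\ref{Mumtaz2} yields the corollary, without writing out the details. You have filled in precisely those details---verifying the monotonicity hypotheses on the stated range of $s$ and reconciling the two divergence alternatives via $\mathcal{H}^s(\Gamma)=\infty$ for $s<(m-1)(n+1)$---which is the intended argument.
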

Further, under the same conditions as Corollary~\ref{smesln}, but with the approximation function $\psi(x)= x^{-\tau}$, we have the following result:
\begin{cor}\label{dimWmleqn}
Let $ m\leq n$ and $\tau> \frac{m}{m-1}-1.$ Then
\[
\dim(W_0(m,n;\tau))=(m-1)n+ \frac{m}{\tau+1},
\]
and when $0<\tau\leq \frac{m}{m-1}-1,$
\[
\dim(W_0(m,n;\tau))=(m-1)(n+1).
\]
\end{cor}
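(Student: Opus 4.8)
The plan is to deduce Corollary~\ref{dimWmleqn} directly from Corollary~\ref{smesln} by specialising to $\psi(r)=r^{-\tau}$, for which $\Psi(r)=r^{-(\tau+1)}$, and then reading the dimension off from the definition of Hausdorff dimension. First I would compute the critical series: with this $\psi$,
\[
\sum_{r=1}^{\infty}\Psi(r)^{s-(m-1)n}r^{m-1}=\sum_{r=1}^{\infty}r^{\,m-1-(\tau+1)(s-(m-1)n)},
\]
which converges if and only if $m-1-(\tau+1)(s-(m-1)n)<-1$, that is, if and only if $s>s_0:=(m-1)n+\frac{m}{\tau+1}$. Note also that for $s$ in the admissible range $\big((m-1)n,(m-1)(n+1)\big]$ the power function $r^{s}$ satisfies the hypotheses of Theorem~\ref{Mumtaz2} (indeed $r^{s-(m-1)(n+1)}$ is non-increasing and $r^{s-(m-1)n}$ is increasing), so Corollary~\ref{smesln} is applicable for every such $s$.

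Next I would locate $s_0$ relative to the interval $\big((m-1)n,(m-1)(n+1)\big]$. One always has $s_0>(m-1)n$, and a one-line computation shows $s_0\le(m-1)(n+1)$ precisely when $\tau\ge\frac{m}{m-1}-1$. Hence in the regime $\tau>\frac{m}{m-1}-1$ the value $s_0$ lies strictly inside the interval: for $s\in(s_0,(m-1)(n+1)]$ the series converges and Corollary~\ref{smesln} gives $\mcal{H}^{s}(W_0(m,n;\tau))=0$, whereas for $s\in((m-1)n,s_0)$ the series diverges and Corollary~\ref{smesln} gives $\mcal{H}^{s}(W_0(m,n;\tau))=\mcal{H}^{s}(\Gamma)$, which is positive (in fact infinite) because $s<(m-1)(n+1)=\dim\Gamma$. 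Since $s_0>(m-1)n$, these two facts pin the dimension down from both sides and yield $\dim W_0(m,n;\tau)=s_0=(m-1)n+\frac{m}{\tau+1}$.

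For the regime $0<\tau\le\frac{m}{m-1}-1$ I would instead observe that $s_0\ge(m-1)(n+1)$, so the series diverges for \emph{every} $s$ in the admissible interval; Corollary~\ref{smesln} then gives $\mcal{H}^{s}(W_0(m,n;\tau))=\mcal{H}^{s}(\Gamma)=\infty$ for every $s<(m-1)(n+1)$, whence $\dim W_0(m,n;\tau)\ge(m-1)(n+1)$. The matching upper bound $\dim W_0(m,n;\tau)\le(m-1)(n+1)$ is exactly the a priori estimate recorded just before Theorem~\ref{Mumtaz2}, coming from the inclusion of $W_0(m,n;\psi)$ in the manifold $\Gamma$; combining the two gives equality. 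There is no real obstacle here: the argument is pure bookkeeping once Corollary~\ref{smesln} is in hand, and the only points needing a moment's care are verifying that the critical exponent $s_0$ genuinely falls inside the range of validity of Corollary~\ref{smesln} (so that the dimension is determined from both sides), and noting that $\mcal{H}^{s}(\Gamma)>0$ for every $s\le\dim\Gamma$, which is automatic for a smooth manifold.
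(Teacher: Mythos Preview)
Your proposal is correct and follows precisely the route the paper indicates: the paper presents Corollary~\ref{dimWmleqn} as an immediate consequence of Corollary~\ref{smesln} with $\psi(x)=x^{-\tau}$ and gives no further argument, and you have simply filled in the details of that deduction (computing the critical exponent $s_0$, locating it relative to the admissible interval $\big((m-1)n,(m-1)(n+1)\big]$, and invoking the manifold upper bound for the small-$\tau$ case).
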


The paper is organized as follows.  In Section~\ref{aux},
we give the definitions of Hausdorff measure and ubiquity, which is the
main
tool for proving Theorem~\ref{thm1},
in a manner appropriate to the setting of this paper. 
Section~\ref{aux} also includes the statement of the `Slicing' lemma (Lemma~\ref{slicing}) which is used to prove Theorem~\ref{Mumtaz2}. The paper continues with the proof of Theorem~\ref{thm1}
in \S~\ref{sthm1}. As is common when proving such `zero-full' results the proof is split into two parts; the convergence case and the divergence case. We conclude the paper with the proof of Theorem~\ref{Mumtaz2}.

\section{Basic Definitions and Auxiliary Results}\label{aux}

In this section we give definitions of some fundamental concepts along
with some auxiliary results which will be needed in the proofs of Theorems~\ref{thm1} and \ref{Mumtaz2}.

\subsection{Hausdorff Measure and Dimension}
\label{hm}

Below we give a brief introduction to Hausdorff $f$-measure and dimension. For further details see \cite{Falc2}.

Let $f:\mb{R}^+\rightarrow \mb{R}^+$ be an increasing continuous
function such that $f(r)\to 0$ as $r\to 0$. Such a function $f$ is
referred to as a \emph{dimension function}. We are now in a position to define the
Hausdorff $f$--measure $\mathcal{H}^f(X) $ of a set $X\subset \mb{R}^n$.

Let $B$ be a (Euclidean) ball in $\mb{R}^n$. That is a set of the form
\[
    B = \{ x\in\mb{R}^n: |x - c |_2 < \delta \}
\]
for some $c\in\mb{R}^n$ and some $\delta>0$. The \textit{diameter} $\mathrm{diam}(B)$ of $B$ is
\[
    \mathrm{diam}(B) := \sup\{ |x-y|_2: x,y\in{}B \}.
\]
Now for any $\rho>0$ a countable collection $\{B_i\}$ of balls in
$\mb{R}^n$ with diameters $\mathrm{diam} (B_i)\le \rho$ such that $X\subset
\bigcup\limits_i B_i$ is called a $\rho$--cover for $X$. Define
\begin{equation*}
\mcal{H}_\rho^f(X)=\inf\left\{\sum\limits_if(\mathrm{diam}(B_i))\;:\;\{B_i\}%
\mbox{ is a $\rho$-cover for }X\right\},
\end{equation*}
where the infimum is taken over all possible
$\rho$--covers of $X$. The Hausdorff $f$-measure of $X$ is defined
to be
\[
\mcal{H}^f(X)=\lim\limits_{\rho\to 0}\mcal{H}_\rho^f(X).
\]
In the particular case when $f(r):=r^s$  $(s>0)$, we write
$\mcal{H}^s (X)$ for  $\mcal{H}^f$ and the measure
 is refereed to as $s$--dimensional Hausdorff measure. The
Hausdorff dimension of a set $X$ is denoted by $\dim(X)$ and is
defined  as follows,
\[
\dim(X):=\inf\{s\in \mb{R}^+\;:\; \mcal{H}^s(X)=0\}=\sup\{s\in
\mb{R}^+\;:\; \mcal{H}^s(X)=\infty\}.
\]
Note that the value of $\dim(X)$ is unique. At the critical exponent $s = \dim X$ the quantity
${\mcal H}^s(X) $ is either zero, infinite or strictly positive and
finite. In the latter case; i.e. when
\[
0< {\mcal H}^s(X)<\infty,
\]
the set $X$ is said to be an $s$-set.

\subsection{Ubiquitous Systems}\label{ubiq}

To make this article as self contained as possible we
describe the main tool used in proving the divergence
part of Theorem \ref{thm1}, the idea of a locally ubiquitous system. The set-up presented below is simplified
for the current problem. The general framework is much more abstract and full details can
be found in \cite{geo} and \cite{BDV}.

Let $\Re =\left\{
R_{\mbf{q} }:\mbf{q} \in \mb{Z}^m\setminus\{\mbf{0}\}\right\} $ be the
family of subsets $R_{\mbf{q}
}:=\{X\in\mb{I}^{mn}:\mbf{q}X=\mbf{0}\}$. The sets $R_{\mbf{q}}$
will be referred to as \emph{resonant sets}.
Let the function
$\beta:\mb{Z}^m\setminus\{\mbf{0}\}\to\mb{R}^+:\mbf{q}\to |\mbf{q}|$
attach a weight to the resonant set $R_{\mbf{q}}.$ Now, given an
approximating function $\psi$ and $R_{\mbf{q}
}$,
let
\begin{equation*}
\Delta\left( R_{q},\Psi (\vert\mbf{q}%
\vert )\right) :=\left\{ X\in \mb{I}^{mn}: \text{dist}\left(
X,R_{q}\right) \leq \frac{\psi (\left\vert \mbf{q}\right\vert )}{%
\left\vert \mbf{q}\right\vert }\right\}
\end{equation*}
where $\mathrm{dist}(X,R_{\mbf{q}}):=\inf \{|X-Y|:Y\in
R_{\mbf{q}}\}.$ Thus $\Delta\left( R_{q},\Psi (\vert\mbf{q}%
\vert )\right)$ is a $\Psi$--neighbourhood of $R_{\mbf{q}}$.
Notice that in the case when the resonant sets are
points the sets $\Delta\left( R_{q},\Psi (\vert\mbf{q}%
\vert )\right)$ are simply balls centred at resonant points.

Let
\[
\Lambda(m,n;\psi)
=\{X\in\mb{I}^{mn}:X\in \Delta\left( R_{q},\Psi (\vert\mbf{q}%
\vert )\right) \text{for i.m. }
\mbf{q}\in\mb{Z}^m\setminus\{\mbf{0}\}\}.
\]
The set $\Lambda(m,n; \psi)$ is a `limsup' set. It consists entirely
of points in $\mb{I}^{mn}$ which lie in infinitely many of the
sets $\Delta\left( R_{q},\Psi (\vert\mbf{q}%
\vert )\right)$.
This is apparent if we restate
$\Lambda(m,n;\psi)$ in a manner
which emphasises its limsup structure.

Fix $k>1$ and for any
$t\in\mb{N}$,
define
\begin{equation}
\label{limsup1}
    \Delta(\psi,t):= 
\bigcup_{
k^{t-1}\leq |\mbf{q}|\leq{}k^t}\Delta(
R_{\mbf{q} },\Psi(|\mbf{q}|)).
\end{equation}
It follows that
\begin{equation}
\label{limsup2}
\Lambda(m,n;\psi) = 
\limsup_{t\rightarrow \infty }%
\Delta( \psi ,t) =
\bigcap_{N=1}^{\infty}\bigcup_{t=N}^\infty{}\Delta( \psi,t).
\end{equation}
The key point by which ubiquity will be utilised is in the fact that the sets $W_0(m,n;\psi)$ and $\Lambda(m,n;\psi)$ actually coincide.

We now move onto the formal definition of a locally ubiquitous system. As stated above 
the definition given below is in a much simplified form suitable to the problem at hand. In the more abstract setting given in \cite{BDV} there are specific conditions on both the measure on the ambient space and its interaction with neighbourhoods of the resonant set which must be shown to  hold. These conditions are not stated below as they hold trivially for Lebesgue measure, the measure on our ambient space $\mathbb{I}^{mn}$, and stating the conditions would complicate the discussion somewhat. Never the less, the reader should be aware that in the more abstract notion of ubiquity these extra conditions exist and need to be be established.

Let $\rho :\mb{R}^{+}\rightarrow\mb{R}^{+}$ be a
function with $\rho \left( r\right) \rightarrow 0$ as $%
r\rightarrow \infty $ and let%
\[
\Delta \left( \rho ,t\right) :=
\underset{\mbf{q} \in J\left(
t\right) }{\bigcup }\Delta\left( R_{\mbf{q} },\rho \left( k^t\right)
\right)
\]
where $J(t)$ is defined to be the set
\[
 J(t):= \left\{\mbf{q}\in\mb{Z}^m\setminus\{\mbf{0}\}: |\mbf{q}|\leq
k^t\right\}
\]
for a fixed constant $k>1$.

\begin{defn}
Let
$B:=B\left( X,r\right) $ be an arbitrary ball with centre $X\in
\mb{I}^{mn} $ and $r\leq r_{o}.$ Suppose there exists a function
$\rho$ and an absolute constant $\kappa>0$ such that
\[
|B\cap \Delta \left( \rho ,t\right)|_{mn} \geq \kappa | B|_{mn}
\text{ for }t\geq t_{o}\left( B\right).
\]
Then the pair $\left( \Re ,\beta \right) $ is said to be a
\emph{locally ubiquitous} system relative to $\left( \rho ,k\right).$
\end{defn}

Loosely speaking the definition of local ubiquity says that the set
$\Delta(\rho,t)$ locally approximates the underlying space
$\mb{I}^{mn}$ in terms of the Lebesgue measure. The function $\rho$,
will be referred to as the {\em ubiquity function}. The actual
values of the constants $\kappa$ and $k$  in the above definition
are irrelevant, it is their existence that is important.  In
practice the  local ubiquity of a system can be established using
standard arguments concerning the distribution of the resonant sets
in $\mb{I}^{mn}$, from which the function $\rho$ arises naturally.

Clearly if $ | \Delta\left( \rho ,t\right)|_{mn} \to 1 \text{
as }t\to{}\infty$
then $\left( \Re ,\beta \right) $ is
locally--ubiquitous. To see this let $B$ be any ball and assume
without loss of generality that \ $|B|_{mn} =\epsilon
>0.$ Then for t sufficiently large,
$$|\Delta\left( \rho ,t\right)|_{mn}
> 1 -\epsilon /2.$$ Hence $|B\cap
\Delta \left( \rho ,t\right)|_{mn}\geq \epsilon /2$ as required.

Given a positive real number $k>1$ a function $f$ will be said to be
\textsl{k-regular} if there exists a positive constant $\lambda <1$
such that for $t$ sufficiently large $$ f(k^{t+1}) \leq \lambda
f(k^t).
$$

Finally, we set $\gamma=\mathrm{dim}(R_{\mathbf{q}})$, the common (Euclidean) dimension of the resonant sets $R_{\mathbf{q}}$.

The following theorem is a simplified version of Theorem 1 from \cite{geo}.

\begin{theorem}[BV]
\label{BV}
Suppose that $(\Re ,\beta)$ is locally
ubiquitous relative to $(\rho, k)$ and $\psi$ is an
approximation function. Let $f$ be a dimension function such that
$r^{-\delta}f(r)$ is monotonic. Furthermore suppose that
$r^{-\gamma}f(r)$ is increasing and $\rho$ is $k$--regular. Then
\begin{equation}\label{sumcon}
\mcal{H}^f(W_0(m,n;\psi)) =  \mcal{H}^f(\mb{I}^{mn}) \quad
\textrm{if} \quad \sum_{n=1}^\infty
\frac{f(\Psi(k^t))\Psi(k^t)^{-\gamma}}{\rho(k^t)^{\delta-\gamma}}=\infty.
\end{equation}
\end{theorem}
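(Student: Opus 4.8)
The plan is to run the standard ubiquity machinery of Beresnevich--Dickinson--Velani; indeed Theorem~\ref{BV} is Theorem~1 of \cite{geo} specialised to Lebesgue measure on $\mb{I}^{mn}$, so the task is to indicate how the general construction unfolds here (with $\delta=mn$ the exponent of Ahlfors regularity of the ambient measure and $\gamma=\dim R_{\mbf{q}}$). Since $W_0(m,n;\psi)=\Lambda(m,n;\psi)=\limsup_{t\to\infty}\Delta(\psi,t)$, it is enough to prove that $\mcal{H}^f(W_0(m,n;\psi)\cap B)>0$ for every ball $B\subset\mb{I}^{mn}$; a standard density argument exploiting the $\limsup$ structure (see \cite{geo}), treated separately according as $\mcal{H}^f(\mb{I}^{mn})$ is finite or infinite, then upgrades this to $\mcal{H}^f(W_0(m,n;\psi))=\mcal{H}^f(\mb{I}^{mn})$. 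To obtain positive measure inside a fixed $B$ we invoke the Mass Distribution Principle: it suffices to build a closed set $F\subseteq\limsup_t\Delta(\psi,t)\cap B$ and a probability measure $\mu$ supported on $F$ for which there is a constant $c>0$ with $\mu(D)\le c\,f(r)$ for every ball $D$ of small radius $r$.

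The set $F$ is a Cantor-type set built in levels, each level corresponding to a \emph{block} of scales. At level $\ell$ we have a finite collection of disjoint balls of common radius $r_\ell$; fix one, say $A$. Choose a block $T_\ell<t\le T_{\ell+1}$ of integers with $T_\ell$ so large that local ubiquity applies to $A$ at every such $t$, so $|A\cap\Delta(\rho,t)|_{mn}\ge\kappa|A|_{mn}$, and that the scales $\Psi(k^t),\rho(k^t)$ are far smaller than $r_\ell$. For each $t$ in the block, $\Delta(\rho,t)\cap A$ is a union of $\rho(k^t)$-tubes about the resonant sets $R_{\mbf{q}}$, $\mbf{q}\in J(t)$; picking an essentially disjoint subfamily whose tubes carry a positive proportion of $|A\cap\Delta(\rho,t)|_{mn}$ (and discarding the negligible contribution of $\mbf{q}$ of small modulus), and packing into each such tube, which meets $A$ in a $\gamma$-dimensional piece of diameter $\asymp r_\ell$, a maximal family of disjoint balls of radius $c\,\Psi(k^t)$ centred on $R_{\mbf{q}}$, we obtain
\[
 N_\ell(t)\;\asymp\; r_\ell^{\delta}\,\Psi(k^t)^{-\gamma}\,\rho(k^t)^{-(\delta-\gamma)}
\]
such balls; each lies in $\Delta(R_{\mbf{q}},\Psi(|\mbf{q}|))\subseteq\Delta(\psi,t)$ since $\psi$ is decreasing and $|\mbf{q}|\le k^t$. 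Collecting these balls over all $\mbf{q}$, all $t$ in the block, and all level-$\ell$ balls $A$ gives the level-$(\ell+1)$ collection, and $F:=\bigcap_\ell\bigcup\{\text{level-}\ell\text{ balls}\}$ is a subset of $\limsup_t\Delta(\psi,t)\cap B=W_0(m,n;\psi)\cap B$. The measure is defined recursively: the mass of a level-$\ell$ ball $A$ is split among its children in proportion to $f$ of their radii, so that a child $C$ arising at scale $t$ receives
\[
 \mu(C)\;=\;\mu(A)\,\frac{f(\Psi(k^t))}{\sum_{T_\ell<s\le T_{\ell+1}}N_\ell(s)\,f(\Psi(k^s))}.
\]

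It remains to check the Mass Distribution hypothesis. For a child $C$ at scale $t$, the required bound $\mu(C)\ll f(\Psi(k^t))$ amounts to $\mu(A)\ll\sum_{T_\ell<s\le T_{\ell+1}}N_\ell(s)f(\Psi(k^s))\asymp r_\ell^{\delta}\sum_{T_\ell<s\le T_{\ell+1}}f(\Psi(k^s))\Psi(k^s)^{-\gamma}\rho(k^s)^{-(\delta-\gamma)}$; since inductively $\mu(A)\ll f(r_\ell)$, it is enough that the block partial sum of $f(\Psi(k^s))\Psi(k^s)^{-\gamma}\rho(k^s)^{-(\delta-\gamma)}$ exceed $f(r_\ell)/r_\ell^{\delta}$, and \emph{this is exactly where the divergence hypothesis enters}: divergence of $\sum_t f(\Psi(k^t))\Psi(k^t)^{-\gamma}\rho(k^t)^{-(\delta-\gamma)}$ lets the block $(T_\ell,T_{\ell+1}]$ be chosen so that its partial sum beats any prescribed quantity. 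For a ball $D$ of intermediate radius $c\,\Psi(k^t)\le r\le r_\ell$ one counts the children it can meet: if $r$ is below the tube thickness $\rho(k^t)$, then $D$ meets $\asymp(r/\Psi(k^t))^{\gamma}$ children of a single tube and $\mu(D)\ll f(r)$ reduces to $r^{-\gamma}f(r)$ being increasing; if $r>\rho(k^t)$, then $D$ meets $\asymp(r/\Psi(k^t))^{\gamma}(r/\rho(k^t))^{\delta-\gamma}$ children and the bound follows from $r^{-\delta}f(r)$ being monotonic, the $k$-regularity of $\rho$ keeping consecutive scales comparable.

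The main obstacle is precisely this final bookkeeping: arranging the blocks, the recursive splitting of mass, and the rate $T_\ell\to\infty$ so that both regimes for intermediate radii yield $\mu(D)\ll f(r)$ with a \emph{single} constant, uniformly over infinitely many levels, and so that the induction closes. The volume counts for $N_\ell(t)$ and the use of the divergence hypothesis are routine; it is forcing the sub-tube estimate (using $r^{-\gamma}f(r)$ increasing) and the trans-tube estimate (using $r^{-\delta}f(r)$ monotonic and $\rho$ $k$-regular) to hold together, across all scales, that is the technical heart. All of this is carried out in full in \cite{geo}; in the present setting the general ubiquity axioms concerning the ambient measure are automatic, since that measure is Lebesgue measure on $\mb{I}^{mn}$, which is Ahlfors regular of exponent $\delta=mn$.
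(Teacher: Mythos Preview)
The paper does not prove Theorem~\ref{BV}; it is simply quoted as ``a simplified version of Theorem~1 from \cite{geo}'' and used as a black box in the divergence part of Theorem~\ref{thm1}. Your proposal correctly identifies this and sketches the Cantor-set/Mass Distribution Principle construction underlying \cite{geo}, so your approach is consistent with (indeed, more detailed than) what the paper offers.
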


\subsection{Slicing}\label{auxi}

We now state a result which is the crucial
key ingredient in the proof of Theorem \ref{Mumtaz2}.
The result was used in \cite{slicing} to
prove the Hausdorff measure version of the W. M. Schmidt's
inhomogeneous linear forms theorem in metric number theory. The authors refer to the technique as
``slicing''. We will merely state the result. For a more detailed discussion and proof see
\cite{slicing} or \cite{Mat}.
However, before we do state the theorem it is necessary to introduce a little notation.

Suppose that $V$ is a linear subspace of
$\mb{I}^k$, $V^{\perp}$ will be used to denote the linear subspace of
$\mb{I}^k$ orthogonal to $V$. Further $V+a:=\left\{v+a:v\in V\right\}$
for $a\in V^{\perp}$.

\begin{lemma}\label{slicing} Let $l, k \in \mb{N}$ be such that $l\leq k$ and let $f \;\textrm{and}\; g:r\to
r^{-l}f(r)$ be dimension functions. Let $B\subset \mb{I}^k$ be a
\emph{Borel} set and let $V$ be a $(k-l)$--dimensional linear
subspace of $\mb{I}^k$. If for a subset $S$ of $V^{\perp}$ of
positive $\mcal{H}^l$
measure$$\mcal{H}^{g}\left(B\cap(V+b)\right)=\infty \hspace{.5cm}
\forall \; b\in S,$$ \noindent then $\mcal{H}^{f}(B)=\infty.$
\end{lemma}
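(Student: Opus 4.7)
The plan is to prove the slicing lemma by contradiction via a Fubini-type inequality relating $\mcal{H}^f(B)$ to an integral of slice measures. Suppose, for contradiction, that $\mcal{H}^f(B)<\infty$. For each $\rho>0$, fix a $\rho$-cover $\{B_i\}$ of $B$ by Euclidean balls of diameters $d_i\leq\rho$ with $\sum_i f(d_i)\leq \mcal{H}^f_\rho(B)+\rho$.

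The key geometric ingredient is that, for a ball $B_i$ of diameter $d_i$ in $\mb{R}^k$, the set $\{b\in V^\perp : B_i\cap(V+b)\neq\emptyset\}$ is itself a ball of diameter $d_i$ in the $l$-dimensional subspace $V^\perp$, hence has $\mcal{H}^l$-measure at most $C d_i^l$ for some constant $C=C(k,l)$. Moreover, whenever $B_i\cap(V+b)\neq\emptyset$, that intersection is contained in a $(k-l)$-dimensional ball of diameter at most $d_i$ inside the affine plane $V+b$. Consequently, for each such $b$ the collection $\{B_i\cap(V+b)\}$ is a $\rho$-cover of $B\cap(V+b)$ inside $V+b$, giving
\[
\mcal{H}^g_\rho\bigl(B\cap(V+b)\bigr)\leq \sum_{i:B_i\cap(V+b)\neq\emptyset} g(d_i).
\]
Integrating over $V^\perp$ against $\mcal{H}^l$, applying Tonelli to exchange sum and integral, and using the defining identity $g(d_i)\cdot d_i^l=f(d_i)$, yields
\[
\int_{V^\perp}\mcal{H}^g_\rho\bigl(B\cap(V+b)\bigr)\,d\mcal{H}^l(b) \;\leq\; \sum_i g(d_i)\cdot C d_i^l \;=\; C\sum_i f(d_i)\;\leq\; C\bigl(\mcal{H}^f_\rho(B)+\rho\bigr).
\]

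Now let $\rho\to 0$. Since $\mcal{H}^g_\rho\nearrow\mcal{H}^g$ monotonically as $\rho\searrow 0$, monotone convergence yields
\[
\int_{V^\perp}\mcal{H}^g\bigl(B\cap(V+b)\bigr)\,d\mcal{H}^l(b)\;\leq\; C\,\mcal{H}^f(B)\;<\;\infty.
\]
By hypothesis, the integrand equals $+\infty$ on the set $S\subset V^\perp$ of positive $\mcal{H}^l$-measure, so the left-hand side is $+\infty$. This contradiction forces $\mcal{H}^f(B)=\infty$, as required.

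The main technical obstacle is the measurability of the map $b\mapsto \mcal{H}^g(B\cap(V+b))$ with respect to $\mcal{H}^l$ on $V^\perp$, which the Tonelli step silently requires. Because $B$ is assumed Borel and $g$ is continuous, this measurability is a standard (but non-trivial) consequence of results in geometric measure theory; in a complete write-up I would cite Federer's treatment or Theorem~6.9 of Mattila rather than reproducing it. A secondary technicality is that Hausdorff measure here is defined via ball covers, whereas the geometric projection argument most naturally handles arbitrary sets of given diameter; this is harmless, since the ball-cover and general-cover definitions agree up to a dimensional multiplicative constant which can be absorbed into $C$.
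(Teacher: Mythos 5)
The paper itself does not prove this lemma --- it explicitly states it without proof and defers to \cite{slicing} and \cite{Mat} --- so there is no in-paper argument to compare against. Your proof is the standard one found in those references (Mattila's Theorem 7.7, adapted from $f(r)=r^s$ to a general dimension function via the identity $g(d)d^{l}=f(d)$), and it is correct: the Fubini-type bound $\int_{V^{\perp}}\mcal{H}^{g}\left(B\cap(V+b)\right)d\mcal{H}^{l}(b)\ll\mcal{H}^{f}(B)$ combined with the hypothesis that the integrand is infinite on a set of positive $\mcal{H}^{l}$-measure gives the contradiction immediately. Two small remarks on the technicalities you flag. First, the measurability of $b\mapsto\mcal{H}^{g}_{\rho}(B\cap(V+b))$ is usually sidestepped altogether by running the whole argument with the upper integral, for which Tonelli in the direction you need (integral of a sum of non-negative functions bounded above by the sum of integrals of measurable majorants) and the monotone passage $\rho\to 0$ both still hold; the Borel hypothesis on $B$ is then not even needed for this step. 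Second, your worry about ball covers versus arbitrary covers is unnecessary here: the paper defines $\mcal{H}^{f}$ via ball covers, the projection of a ball $B_i$ onto $V^{\perp}$ is a ball of diameter $d_i$, and the slice $B_i\cap(V+b)$ is itself a ball of diameter at most $d_i$ inside the affine plane $V+b$, so your induced covers are already ball covers and no comparability constant (which for a general, non-doubling dimension function would be problematic) is required. Neither point affects the validity of the argument.
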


We are now in a position to begin the proofs of Theorems~\ref{thm1} and \ref{Mumtaz2}.

\section{The Proof of Theorem \ref{thm1}}\label{sthm1}

As stated above, the proof of Theorem~\ref{thm1} is split into two parts; the convergence case and the divergence case. We begin with the convergence case as this is more straightforward than the divergence case.

\subsection {The Convergence Case}\label{con}

Recall that in the statement of Theorem~\ref{thm1}  we
assumed that $m>n$ and we imposed some conditions on the dimension function $f$. As it turns out these conditions are not needed in the convergence case and we can state and prove a much cleaner result which has the added benefit of also implying the convergence case of Theorem \ref{Mumtaz2}.

\begin{theorem}\label{conv}
Let $\psi$ be an approximating function and let $f$ be a
dimension function. If
\[
\sum \limits_{r=1}^{\infty}f(\Psi (r))\Psi(r)^{-(m-1)n}r^{m-1}<\infty,
\]
then
\[
\mcal{H}^f\left(W_0\left( m,n;\psi
\right)\right)=0.
\]
\end{theorem}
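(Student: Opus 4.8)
The plan is to prove Theorem~\ref{conv} by the standard covering argument (the ``easy half'' of a zero--full law, i.e. a Borel--Cantelli type estimate). Since $W_0(m,n;\psi)=\Lambda(m,n;\psi)$ is a $\limsup$ set, $X\in W_0(m,n;\psi)$ forces $X$ to lie in $\Delta(R_{\mbf{q}},\Psi(|\mbf{q}|))$ for infinitely many $\mbf{q}$, hence for every $Q$ we have the cover
\[
W_0(m,n;\psi)\subseteq\bigcup_{|\mbf{q}|\geq Q}\Delta\left(R_{\mbf{q}},\Psi(|\mbf{q}|)\right).
\]
So the whole proof reduces to producing an efficient $\rho$--cover of each $\Delta(R_{\mbf{q}},\Psi(|\mbf{q}|))$ by balls and summing $f(\mathrm{diam})$ over it.

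The key geometric input is the structure of the resonant set $R_{\mbf{q}}=\{X\in\mb{I}^{mn}:\mbf{q}X=\mbf{0}\}$. Writing $X$ as $n$ column vectors in $\mb{R}^m$, the single linear condition $\mbf{q}X=\mbf{0}$ imposes one linear constraint on each column, so $R_{\mbf{q}}$ is (a bounded piece of) an affine subspace of $\mb{R}^{mn}$ of dimension $(m-1)n$. Its $\Psi(|\mbf{q}|)$--neighbourhood $\Delta(R_{\mbf{q}},\Psi(|\mbf{q}|))$ is therefore, up to constants, a ``slab'' of dimensions $\underbrace{1\times\cdots\times1}_{(m-1)n}\times\underbrace{\Psi(|\mbf{q}|)\times\cdots\times\Psi(|\mbf{q}|)}_{n}$; more precisely it is contained in $\ll 1$ many such slabs. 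First I would record this, taking care with the normalisation: because the form for each column is $q_1x_{1i}+\cdots+q_mx_{mi}$ with $|\mbf{q}|\leq k^t$, the thickness transverse to $R_{\mbf{q}}$ is $\asymp\psi(|\mbf{q}|)/|\mbf{q}|=\Psi(|\mbf{q}|)$ in each of the $n$ columns, which is exactly why the factor $\Psi$ rather than $\psi$ appears. Next I would cover a single slab, for any $\rho>0$ with $\rho\geq\Psi(|\mbf{q}|)$, by roughly $\rho^{-(m-1)n}$ balls of diameter $\rho$ (tile the $(m-1)n$ ``long'' directions by a grid of side $\rho$; the $n$ ``short'' directions are already of length $\leq\rho$). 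This gives, for fixed $\mbf{q}$,
\[
\mcal{H}^f_\rho\bigl(\Delta(R_{\mbf{q}},\Psi(|\mbf{q}|))\bigr)\ll\rho^{-(m-1)n}f(\rho),
\]
and the natural, essentially optimal, choice is $\rho=\rho_{\mbf{q}}:=\Psi(|\mbf{q}|)$ (the slab thickness), which yields the bound $\ll\Psi(|\mbf{q}|)^{-(m-1)n}f(\Psi(|\mbf{q}|))$ per resonant set. (Here one uses that $\Psi(|\mbf{q}|)\to0$, so these diameters do tend to $0$ as $|\mbf{q}|\to\infty$, as required for a genuine $\mcal{H}^f$ estimate.)

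Summing over $\mbf{q}$ with $|\mbf{q}|\geq Q$ and grouping by the value $r=|\mbf{q}|$ (there are $\asymp r^{m-1}$ integer vectors $\mbf{q}$ with $|\mbf{q}|=r$), I get
\[
\mcal{H}^f_{\rho(Q)}\bigl(W_0(m,n;\psi)\bigr)\ll\sum_{r=Q}^\infty f(\Psi(r))\,\Psi(r)^{-(m-1)n}\,r^{m-1},
\]
where $\rho(Q):=\Psi(Q)\to0$ as $Q\to\infty$ since $\psi$, hence $\Psi$, decreases to $0$. By hypothesis the full series converges, so its tail tends to $0$ as $Q\to\infty$; letting $Q\to\infty$ (so $\rho(Q)\to0$) gives $\mcal{H}^f(W_0(m,n;\psi))=0$. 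I would note that monotonicity/regularity of $f$ is not needed: only that $f$ is a dimension function (increasing, $f(r)\to0$), which is what lets $f(\mathrm{diam})$ be controlled termwise, and this is precisely the clean statement promised in the text. The one point needing a little care — the main (minor) obstacle — is the uniformity of the implied constants in the slab covering: one must check that the number of axis--aligned unit slabs needed to contain $\Delta(R_{\mbf{q}},\Psi(|\mbf{q}|))\cap\mb{I}^{mn}$, and the grid constant, can be taken independent of $\mbf{q}$ (the direction of the slab varies with $\mbf{q}$, but its $n$ transverse widths are all $\asymp\Psi(|\mbf{q}|)$ with absolute constants coming only from comparing the sup norm on $\mbf{q}$ with the Euclidean geometry of the neighbourhood), so that the per--$\mbf{q}$ bound $\ll f(\Psi(|\mbf{q}|))\Psi(|\mbf{q}|)^{-(m-1)n}$ holds with one constant for all $\mbf{q}$. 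Once that is in hand the summation is routine.
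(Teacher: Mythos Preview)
Your proposal is correct and follows essentially the same route as the paper: use the $\limsup$ structure to get the tail cover $\bigcup_{|\mbf{q}|\geq Q}\Delta(R_{\mbf{q}},\Psi(|\mbf{q}|))$, cover each $(m-1)n$--dimensional slab $\Delta(R_{\mbf{q}},\Psi(|\mbf{q}|))$ by $\ll\Psi(|\mbf{q}|)^{-(m-1)n}$ cubes of side $\Psi(|\mbf{q}|)$, group by $|\mbf{q}|=r$ using $\#\{\mbf{q}:|\mbf{q}|=r\}\asymp r^{m-1}$, and let the tail of the convergent series go to zero. Your write-up is in fact more explicit than the paper's about why $R_{\mbf{q}}$ has dimension $(m-1)n$ and why the transverse thickness is $\Psi$ rather than $\psi$, and you correctly flag the uniformity of the covering constant in $\mbf{q}$ as the only point needing a moment's thought.
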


Obviously Theorem~\ref{conv} implies the convergence cases of
Theorems~\ref{thm1} and \ref{Mumtaz2}.

\begin{proof}
To prove Theorem~\ref{conv} we make use of
the natural cover of $W_0\left( m,n;\psi \right)$ given by
Equations~\eqref{limsup1} and \eqref{limsup2}.
It follows almost immediately that for each $N\in \mb{N}$ the family%
\begin{equation*}
\left\{ \underset{R_{\mbf{q}}:\left\vert \mbf{q}\right\vert =r}{\bigcup }\Delta\left( R_{\mbf{q}},\Psi (\vert\mbf{q}%
\vert )\right) :r=N,N+1,\dots\right\}
\end{equation*}
is a cover for the set $W_0\left( m,n;\psi \right) $. That is
\[
W_0\left( m,n;\psi \right) \subset \underset{r>N}{{\bigcup
}}\underset{|\mbf{q}| =r}{{\bigcup }}\Delta( R_{\mbf{q}},\Psi(|\mbf{q}%
|))
\]
for any $N\in\mathbb{N}$.

Now, for each resonant set $R_{\mbf{q}}$ let $\Delta (q)$ be a collection of $mn$-%
dimensional closed hypercubes $C$ with disjoint interiors and side length
$\Psi (|\mbf{q}|)$ such that
\[
 C{}\bigcap 
\bigcup_{|\mathbf{q}|=r}
\Delta( R_{\mbf{q}},\Psi(|\mbf{q}|))
\neq \emptyset
\]
and
\[
\Delta(R_{\mbf{q}},\Psi (|
\mbf{q}|)) \subset \underset{C\in \Delta
(q)}{{\bigcup }}C.
\]
Then
\[
\# \Delta(q)\ll(\Psi(|\mbf{q}|))
^{-(m-1)n}.
\]
were $\#$ denotes cardinality.

Note that
\begin{equation*}
W_0\left( m,n;\psi \right) \subset \underset{r>N}{{\bigcup
}}\underset{\left\vert \mbf{q}\right\vert =r}{{\bigcup
}}\Delta\left( R_{\mbf{q}},\Psi (\left\vert
\mbf{q}\right\vert)\right) \subset \underset{r>N}{{\bigcup
}}\underset{\Delta (q):\left\vert \mbf{q}\right\vert =r}{{\bigcup
}}\underset{C\in \Delta (q)}{{\bigcup }}C.
\end{equation*}

It follows on setting $\rho(N)=\psi(N)$ that
\begin{eqnarray*}
\mcal{H}_{\rho}^f\left(W_0\left( m,n;\psi \right)\right)   &\leq &%
\sum\limits_{r>N} \sum\limits_{\Delta (q):\left\vert
\mbf{q}\right\vert =r} \sum \limits_{C\in \Delta (q)}f(\Psi
(\left\vert
\mbf{q}\right\vert )) \\
&\ll &\sum \limits_{r>N} r^{m-1}f\left(\Psi (r)\right)\Psi (r)
^{-(m-1)n}\to 0 \hspace{.5cm}as \hspace{.5cm}\rho\to 0,
\end{eqnarray*}
and thus from the definition of $\mcal{H}^{f}$--measure that
$\mcal{H}^f(W_0\left( m,n;\psi \right))=0$, as required.
\end{proof}

\subsection{The Divergence Case}\label{div}

When $m>n$, the divergence part of Theorem~\ref{thm1} relies on the notion of ubiquity and primarily Theorem~\ref{BV}.
To use ubiquity we must show that $\left( \Re ,\beta
\right) $ is locally--ubiquitous with respect to $\left( \rho
,k\right)$ for a suitable ubiquity function $\rho$. For the sake of simplicity we fix  
$k=2$.

To establish ubiquity we need two technical lemmas. The first of which is due to Dickinson~\cite{Dickinson} and is an analogue of
Dirichlet's theorem. The second is a slight modification again of a result of Dickinson from the same paper. The key difference being the introduction of a function $\omega$ instead of
$\log$. We prove only the second result here and the reader is referred to the previously mentioned paper for the proof of Lemma~\ref{dir}.
\begin{lemma}\label{dir}
  For each $X\in \mb{I}^{mn}$, there exists a non-zero integer vector
$\mbf{q}$ in $ \mb{Z}^{m}$ with $\left\vert \mbf{q}\right\vert \leq
2^{t}\left( t\in \mb{N} \right) $ such that
\begin{equation*}
\left\vert \mbf{q}X\right\vert <m\left( 2^{t}\right)
^{-\frac{m}{n}+1}.
\end{equation*}
\end{lemma}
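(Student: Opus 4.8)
The plan is to run a pigeonhole (box principle) argument of exactly the classical Dirichlet type, adapted so that the quantity being forced small is $|\mathbf{q}X|$ rather than $\|\mathbf{q}X\|$. Fix $X\in\mathbb{I}^{mn}$ and an integer $t\in\mathbb{N}$, and write $N=2^t$. Consider the finite set of integer vectors $\mathbf{q}\in\mathbb{Z}^m$ with $0\le q_i\le N$ for each $i$; there are $(N+1)^m$ of these. For each such $\mathbf{q}$, form the vector $\mathbf{q}X\in\mathbb{R}^n$. Since $X$ has all entries in $[-\tfrac12,\tfrac12]$, each coordinate of $\mathbf{q}X$ lies in an interval of length at most $mN$ (roughly $[-mN/2,mN/2]$), so $\mathbf{q}X$ lands in a box in $\mathbb{R}^n$ of side comparable to $mN$. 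Partition that box into $M^n$ congruent sub-boxes of side $mN/M$, for an integer $M$ to be chosen. If $(N+1)^m>M^n$ then two distinct vectors $\mathbf{q}',\mathbf{q}''$ from our set give $\mathbf{q}'X$ and $\mathbf{q}''X$ in the same sub-box, and then $\mathbf{q}=\mathbf{q}'-\mathbf{q}''$ is a non-zero integer vector with $|\mathbf{q}|\le N=2^t$ and $|\mathbf{q}X|\le mN/M$ in each coordinate, i.e. $|\mathbf{q}X|\le mN/M$.

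The remaining step is the arithmetic of choosing $M$. We want $mN/M$ to be at most $m\,N^{-m/n+1}=m\,(2^t)^{-m/n+1}$, i.e. we want $M\ge N^{m/n}$, while simultaneously keeping the pigeonhole condition $(N+1)^m>M^n$. Taking $M=\lceil N^{m/n}\rceil$ gives $M^n$ essentially $N^m$, and one checks that the strict inequality $(N+1)^m>M^n$ does hold (the $+1$ in $(N+1)^m$ provides the slack, since $M^n< (N^{m/n}+1)^n$ which is dominated by $(N+1)^m$ for the relevant range of $m,n,N$; the boundary cases for small $t$ are absorbed into the constant $m$ in the statement, or one simply notes that $\mathbf{q}$ of size $1$ already works there). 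With this choice, $mN/M\le m N/N^{m/n}=m\,(2^t)^{1-m/n}$, which is precisely the bound claimed.

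The one point needing a little care — and the only place the argument is not entirely mechanical — is matching the box-counting inequality with the target exponent cleanly enough to land exactly the constant $m$ displayed in the lemma, rather than some larger absolute constant; this is a bookkeeping matter about how the sides of the sub-boxes and the ceiling in $M=\lceil N^{m/n}\rceil$ interact, and it is handled by being slightly generous in the partition (using side $mN/M$ and noting the diameter in each coordinate of a sub-box is $mN/M$, not $2mN/M$, once one translates so that the relevant coordinate range is $[0,mN]$). Since the lemma is quoted from Dickinson~\cite{Dickinson}, the paper will simply cite it; the sketch above is the standard proof one would reconstruct.
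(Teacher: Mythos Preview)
The paper does not prove this lemma at all: immediately after stating it, the authors write that they ``prove only the second result here and the reader is referred to the previously mentioned paper for the proof of Lemma~\ref{dir}'', i.e.\ they simply cite Dickinson~\cite{Dickinson}. You correctly anticipated this in your final paragraph.

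Your sketch is the standard Dirichlet-type pigeonhole argument and is essentially correct. The one genuine point of care you flagged --- making $(N+1)^m>M^n$ hold with $M=\lceil N^{m/n}\rceil$ --- does go through in the relevant regime $m>n$: writing $(N+1)^m=N^m(1+1/N)^m$ and $M^n\le N^m(1+N^{-m/n})^n$, one has $N^{-m/n}<1/N$ and $n<m$, so $(1+N^{-m/n})^n<(1+1/N)^m$. For $m\le n$ the stated bound is at least $m\ge 1$ and any standard basis vector already satisfies it, so that case is trivial. Using half-open sub-boxes yields the strict inequality in the conclusion.
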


\begin{lemma}
Let $\omega$ be a positive real increasing function such that $\frac{1}{\omega \left( t\right) }%
 \rightarrow 0$ as $t\rightarrow \infty $ and such that for any $C>1$
 and $t$ sufficiently large $\omega \left( 2t\right) <C\omega \left(
 t\right) .$ The the family $(\Re,\beta)$ is locally ubiquitous with respect to
the function $\rho : \mb{N} \rightarrow \mb{R} ^{+}$ where $\rho(t)=m(2^t)^{-\frac{m}{n}
}\omega(t)$.
\end{lemma}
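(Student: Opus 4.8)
The plan is to deduce local ubiquity from the counting statement of Lemma~\ref{dir} by a standard pigeonhole-and-covering argument, in which the function $\omega$ is used to thin out the family of integer vectors so that the resulting $\Psi$-neighbourhoods do not overlap too wildly. First I would fix an arbitrary ball $B = B(X_0,r) \subset \mb{I}^{mn}$ with $r \le r_0$ and a large parameter $t$. Applying Lemma~\ref{dir} (with the obvious localisation to the ball $B$, which only costs a constant depending on $r$ in the number of available scales) produces, for each point $X \in B$, a non-zero $\mbf{q} \in \mb{Z}^m$ with $|\mbf{q}| \le 2^t$ and $|\mbf{q}X| < m(2^t)^{-m/n+1}$. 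Dividing through by $|\mbf{q}|$ shows that $X$ lies within distance $\ll (2^t)^{-m/n+1}/|\mbf{q}|$ of the resonant hyperplane $R_{\mbf{q}}$. To match the required ubiquity function $\rho(t) = m(2^t)^{-m/n}\omega(t)$, I would split the vectors $\mbf{q}$ according to dyadic size $|\mbf{q}| \asymp 2^{s}$: the point $X$ is then within $\ll (2^t)^{-m/n+1} 2^{-s}$ of $R_{\mbf{q}}$, and since we only care about $s \le t$, it suffices to collect the contribution of those $\mbf{q}$ with $|\mbf{q}|$ in a window close to $2^t$, where the distance bound is $\ll (2^t)^{-m/n} = \rho(t)/(m\,\omega(t)) \le \rho(t)$. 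Hence every $X\in B$ either already lies in $\Delta(\rho,t)$, or is captured by some $\mbf{q}$ of smaller size, whose contribution is controlled separately.

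The heart of the argument is the measure estimate: I would estimate $|B \setminus \Delta(\rho,t)|_{mn}$ from above. Points of $B$ not in $\Delta(\rho,t)$ must be approximated by $\mbf{q}$ with $|\mbf{q}| \le 2^{t}$ but with the approximation only guaranteed at scale $(2^t)^{-m/n+1}/|\mbf{q}|$ rather than $\rho(t) = m(2^t)^{-m/n}\omega(t)$; this forces $|\mbf{q}| \le (2^t)/\omega(t)$, roughly. So $B\setminus\Delta(\rho,t)$ is covered by the $\Psi$-neighbourhoods $\Delta(R_{\mbf{q}}, (2^t)^{-m/n+1}/|\mbf{q}|)$ over all non-zero $\mbf{q}$ with $|\mbf{q}| \le 2^t/\omega(t)$. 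Each such neighbourhood, intersected with $B$, is a slab of thickness $\ll (2^t)^{-m/n+1}/|\mbf{q}|$ about an $(m-1)n$-dimensional linear subspace, so its $mn$-dimensional measure inside $B$ is $\ll r^{(m-1)n}\big((2^t)^{-m/n+1}/|\mbf{q}|\big)^{n}$. Summing over $\mbf{q}$ with $|\mbf{q}| \asymp 2^u$ for $0 \le u \le t - \log_2\omega(t)$ gives a geometric-type series; using the hypothesis that $\omega(2t) < C\omega(t)$ (equivalently, $\omega$ grows slower than any power, so $\omega(t)$ is essentially $2^{o(t)}$) one checks that the total is $\ll r^{(m-1)n}\,\omega(t)^{-n}\,(\text{bounded factor}) \to 0$ as $t \to \infty$. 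Consequently $|B \cap \Delta(\rho,t)|_{mn} \ge |B|_{mn} - o(|B|_{mn}) \ge \tfrac12 |B|_{mn}$ for $t \ge t_0(B)$, which is exactly the definition of local ubiquity with $\kappa = \tfrac12$.

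The step I expect to be the main obstacle is the summation over the dyadic blocks of $\mbf{q}$, i.e. verifying that $\sum_{u} 2^{mu}\,(2^{u})^{-n}\,(2^t)^{(-m/n+1)n} = \sum_u 2^{(m-n)u}(2^t)^{-m+n}$ genuinely decays once the range of $u$ is cut off at $u \le t - \log_2\omega(t)$; here one must exploit that $m > n$ makes the sum dominated by its top term $2^{(m-n)(t-\log_2\omega(t))}(2^t)^{-m+n} \asymp \omega(t)^{-(m-n)}$, and then the slowly-varying condition on $\omega$ guarantees this tends to $0$ while the cutoff itself is legitimate. A secondary point requiring care is the localisation of Lemma~\ref{dir} to the ball $B$ rather than all of $\mb{I}^{mn}$, and the bookkeeping of the absolute constant $\kappa$, but these are routine. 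I would also remark that the $k$-regularity of $\rho$ (needed later in Theorem~\ref{BV}) is immediate from the hypothesis on $\omega$, since $\rho(2^{t+1})/\rho(2^t) = 2^{-m/n}\,\omega(t+1)/\omega(t)$ is eventually less than $1$.
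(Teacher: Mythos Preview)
Your proposal is correct and follows essentially the same route as the paper: use Lemma~\ref{dir} to associate to each $X$ an integer vector $\mbf q$, observe that if $|\mbf q|>2^t/\omega(t)$ then $X$ already lies in $\Delta(\rho,t)$, and bound the measure of the exceptional set (those $X$ whose Dirichlet vector has $|\mbf q|\le 2^t/\omega(t)$) by summing slab volumes to get a bound $\ll \omega(t)^{-(m-n)}\to 0$. The only difference is cosmetic: the paper works globally on $\mb I^{mn}$, shows $|\Delta(\rho,t)|_{mn}\to 1$, and then invokes the remark (stated just before the definition of local ubiquity) that full-measure implies local ubiquity; consequently no ``localisation of Lemma~\ref{dir} to the ball $B$'' is needed at all, and your worry about that step can be dropped.
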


\begin{proof}
Throughout this proof $\mbf{q}$ will refer to those integer vectors which
satisfy the conclusion of Lemma~\ref{dir}. Note that a simple calculation will establish the fact that $\rho$ is $2$-regular for $t$ sufficiently large. Define now the set $E(t)$ where
\[
E(t) =\{ X\in \mb{I}^{mn}:|\mbf{q}|<%
\frac{2^{t}}{\omega(t) }\}
\]
and
\[
\Delta(t) = \{ X\in \mb{I}^{mn}: |
X-\partial \mb{I}^{mn} | \geq 2^{-t}\} \setminus
E(t),
\]
$\partial \mb{I}^{mn}$ denotes the boundary of the set $\mathbb{I}^{mn}$.

\noindent Then%
\begin{equation*}
E(t) \subseteq \underset{1\leq r\leq \frac{2^{t}}{\omega
\left(
t\right) }}{{\bigcup }}\ \underset{\left\vert \mbf{q}\right\vert =r}{%
{\bigcup }}\left\{ X\in \mb{I}^{mn}:\left\vert \mbf{q}X\right\vert
<m\left( 2^{t}\right) ^{\frac{-m}{n}+1}\right\}.
\end{equation*}

\noindent Therefore%
\begin{eqnarray*}
\left\vert E\left( t\right) \right\vert_{mn} &\leq&\sum \limits_{1\leq r\leq \frac{%
2^{t}}{\omega \left( t\right) }} \sum \limits_{\left\vert \mbf{q}%
\right\vert =r}\frac{m^n\left( 2^{t}\right) ^{-m+n}}{\left\vert
\mbf{q}\right\vert _{2}^{n}} \\
&\ll &\left( 2^{t}\right) ^{-m+n}\sum \limits_{1\leq r\leq
\frac{2^{t}}{\omega
\left( t\right) }} r^{m-n-1} \\
&\ll &\left( 2^{t}\right) ^{-m+n}\frac{2^{t}}{\omega \left( t\right)
}\left(
\frac{2^{t}}{\omega \left( t\right) }\right) ^{m-n-1} \\
&=&\left( \omega \left( t\right) \right) ^{-m+n}.
\end{eqnarray*}

\noindent Therefore, since $m>n, \lim\limits_{t\rightarrow \infty
}\left\vert E\left(
t\right) \right\vert_{mn} \rightarrow 0$ and $\underset{t\rightarrow \infty }{\lim}%
\left\vert \mb{I}^{mn}\backslash \Delta \left( t\right)
\right\vert_{mn} \rightarrow 0. $ Now to show that $\left\vert
\left( \Delta\left( \rho ,t\right) \right) \right\vert_{mn}
\rightarrow 1$ as $t\rightarrow \infty ,$ it would be enough to show
that $\Delta \left( t\right) \subseteq \Delta \left( \rho ,t\right)
.$ For this let $X\in \Delta \left( t\right) \Rightarrow X\notin
E\left( t\right) $ and let $\overset{\sim }{\mbf{q}}$ be from lemma
\ref{dir},
\begin{equation*}
\frac{2^{t}}{\omega \left( t\right) }\leq \left\vert \overset{\sim }{\mbf{%
q}}\right\vert \leq 2^{t}.
\end{equation*}

\noindent By definition $\left\vert \overset{\sim
}{\mbf{q}}\right\vert =\left\vert \overset{\sim }{q}_{i}\right\vert
$ for some $1\leq i\leq m.$ Let $\delta _{j}=\frac{-\overset{\sim
}{\mbf{q}}\cdot \mbf{x}^{\left( j\right) }}{\left\vert \overset{\sim
}{q}_{i}\right\vert },j=1,2\dots,n$ so that $\overset{\sim
}{\mbf{q}}\cdot \left( \mbf{x}^{\left(
j\right) }+\delta _{j}\mbf{e}^{\left( i\right) }\right) =0,$ where $%
\mbf{e}^{\left( i\right) }$ denotes the i'th basis vector. Also%
\begin{equation*}
\left\vert \delta _{j}\right\vert =\left\vert \frac{-\overset{\sim }{\mbf{%
q}}\cdot \mbf{x}^{\left( j\right) }}{\left\vert \overset{\sim }{q}%
_{i}\right\vert }\right\vert \leq m\left( 2^{t}\right)
^{\frac{-m}{n}}\omega \left( t\right).
\end{equation*}%
\noindent Therefore $U=\left( \mbf{x}^{\left( j\right) }+\delta _{j}\mbf{e}%
^{\left( i\right) }\right) =\left( \mbf{x}^{\left( 1\right) }+\delta _{1}%
\mbf{e}^{\left( i\right) },\mbf{x}^{\left( 2\right) }+\delta _{2}%
\mbf{e}^{\left( i\right) }\dots,\mbf{x}^{\left( n\right) }+\delta _{n}%
\mbf{e}^{\left( i\right) }\right) $ is a point in $R_{\overset{\sim }{%
\mbf{q}}}$ and $\left\vert X-U\right\vert \leq m\left( 2^{t}\right) ^{%
\frac{-m}{n}}\omega \left( t\right) =\rho \left( t\right) .$ Hence
$X\in
 \Delta \left( \rho ,t\right)
 =\underset{%
2^{t-1}<\left\vert \mbf{q}\right\vert \leq 2^{t}}{{\bigcup
}}\Delta\left( R_{\mbf{q}},\rho \left( t\right) \right) $ so that
\begin{equation*}
\left\vert \left( \Delta\left( \rho ,t\right) \right)
\right\vert_{mn} \rightarrow 1\text{ \ as \ }t\rightarrow \infty.
\end{equation*}

\end{proof}

We are now almost in position to apply Theorem~\ref{BV}. To this end consider the sum
\[
\label{sumcomp}
\sum \limits_{t=1}^{\infty}f\left(\Psi \left( k^{t}\right) \right)
\left(\frac{\Psi(k^t)}{\rho(k^t)}\right)^{\delta-\gamma}\Psi(k^t)^{-\delta},
\]
which is comparable to
\[
\label{sumcomp2}
\sum_{t=1}^{\infty}f(\Psi ( 2^{t}) )
\Psi(2^t)^{-(m-1)n}(2^t)^{m}\omega(t)^{-n}.
\]

Assuming that $\psi:\mb{R}^{+}\rightarrow\mb{R}^{+}$ is a monotonic function, $\alpha,
\beta \in\mb{R}$ and $k>1.$ Let $f$ be a dimension function. It is straightforward to show that the
convergence or divergence of the sums
\begin{equation*}
\sum\limits_{t=1}^{\infty} k^{t\alpha }f(\psi \left(
k^{t}\right))\psi \left( k^{t}\right)^{\beta} \text{ \ \ \ and \ \ \
}\sum \limits_{r=1}^{\infty}r^{\alpha -1}f(\psi \left( r\right))\psi
\left( r\right)^{\beta}
\end{equation*}
coincide. By virtue of this fact, the sum in Equation~\eqref{sumcomp} is the same as
\begin{equation}\label{sumequ}
\sum \limits_{r=1}^{\infty}f\left( \Psi \left( r\right) \right)
\Psi\left( r\right) ^{-(m-1)n}r^{m-1}\omega \left( r\right) ^{-n}.
\end{equation}

To obtain the precise statement of the Theorem
\ref{thm1} we need to remove the $\omega $ factor from the above. To do this we
choose $\omega $ in such a way that the sum
\[
\sum \limits_{r=1}^{\infty}f\left( \Psi \left(
r\right) \right) \Psi\left( r\right) ^{-(m-1)n}r^{m-1}\omega \left(
r\right) ^{-n}
\]
will converge (\textit{respec.} diverge) if and only if the sum
\begin{equation}\label{eq:sum2}
\sum \limits_{r=1}^{\infty}f\left( \Psi \left( r\right) \right)
\Psi\left( r\right) ^{-(m-1)n}r^{m-1}
\end{equation}
converges (\textit{respec.} diverges).
This is always possible. Firstly, note that if the sum in Equation~\eqref{sumequ} diverges then so does the sum in Equation~\eqref{eq:sum2}.
On the other hand and if the sum in Equation~\eqref{eq:sum2} diverges, then we
can find a strictly increasing sequence of positive integers
$\{r_i\} _{i\in \mb{N}}$ such that
\[
\sum_{r_{i-1}\leq r\leq r_{i}}^{\infty}f( \Psi(r) )\Psi(r)^{-(m-1)n}r^{m-1}>1
\]
and $r_{i}>2r_{i-1}$. Now simply define $\omega$ be the step function $%
\omega(r) =i^{\frac{1}{n}}$ for $r_{i-1}\leq r\leq
r_{i}$ and $\omega$ satisfies the required properties. 

This completes the proof of Theorem~\ref{thm1}.

\section{Proof of Theorem \ref{Mumtaz2}}\label{proofThm2}

In view of Theorem~\ref{conv} we need only prove the
divergence part of Theorem~\ref{Mumtaz2}. The proof will be split into two
sub-cases. The first, which we refer to as the ``infinite measure'' case, is for dimension functions $f$ such that $r^{-(m-1)(n+1)}f(r)\to\infty$. 
The second case corresponds to $f$ which satisfy $r^{-(m-1)(n+1)}f(r)\to{}C$ for some constant $C>0$, in which case the measure is comparable to $(m-1)(n+1)$-Lebesgue measure 
and we call this case the ``finite measure'' case. 

We begin the proof of Theorem~\ref{Mumtaz2} with the
key observation that if $m\leq n$,
$W_0(m,n;\psi)$ lies in a manifold of dimension at most $(m-1)(n+1)$. 

Consider first the case when $m=n$.
Take any $X\in{}W_0(m,m;\psi)$, then
the column vectors of $X$ 
are linearly dependent. To prove this, assume to the contrary, that the column vectors 
are linearly independent.
Since $X$ is a member of $W_0(m, n; \psi)$ there exists
infinitely many $\mbf{q}$ such that
\[
|\mbf{q}X|<\psi({\mbf{|q|}}).
\]
Setting $\mbf{q}X=\theta$ where $
|\theta|<\psi({\mbf{|q|}})$, as all column vectors are linearly
independent 
$X$ is invertible. Thus
\[
|\mbf{q}|=|\theta X^{-1}|
\]
and it follows that
\[
1\leq |\mbf{q}|=|\mbf{q} X X^{-1}|\leq C_2(X)\psi(|\mbf{q}|)\to 0
\text{\quad{}as\quad} |\mbf{q}|\to\infty.
\]
Which is clearly impossible. Therefore the column vectors of $X$ must be
linearly dependent and so $\det{}X=0$. This in turn implies that $X$ lies on some surface defined by the multinomial equation $\det{}Y=0$ where $Y\in\mathbb{I}^{m^2}$. As this equation defines a co-dimension $1$ manifold in $\mathbb{I}^{m^2}$, at most
$m^{2}-1$ independent variables are needed to fully specify $X$.

This above argument is essentially that needed to prove the more general case when $m\leq{}n$. We prove the result only for the case when $n=m+1$ as the general case follows with a straightforward modification of the argument given.  Given any $X\in{}W(m,m+1;\psi)$. Thinking of $X$ as an $m$ by $m+1$ matrix as above. Let $x$ be the first column vector, $x_2$ the $(m+1)$--th column vector and $X^\prime$ the $m$ by $m-1$ matrix formed by taking the remaining $m-1$ columns of $X$. Further let $X_1$ be the $m\times{}m$ matrix with first column $x$ and remaining columns made up of $X^\prime$. Similarly let $X_2$ be the $m\times{}m$ matrix with first $m-1$ columns the same as $X^\prime$ and final column $x_2$. Using the same argument as above we  claim that both these sub-matrices of $X$ are in fact non-invertible and so each sub-matrix lies on a co-dimension $1$ manifold $\Gamma_i$ determined by the equation $\det{}Y_i=0$ with $i=1,2$. Here $Y_1$ is an $m\times{}m$ matrix consisting of all but the final $m$ variables of an arbitrary element $Y\in\mathbb{I}^{m(m+1)}$ and $Y_2$ is similarly defined but the first $m$ variables are now removed. Now $X$ must lie in the intersection of the two manifolds $\Gamma_1$ and $\Gamma_2$, say $\Gamma$. This is a co-dimension $2$ manifold and the result is proved. 

With the above observation in mind we begin the proof of Theorem~\ref{Mumtaz2} in earnest by defining the set:
\begin{equation}
W_0\left( m,n;c\psi \right) :=\left\{ X\in \mb{I}
^{mn}:\left\vert \mbf{q}X\right\vert <c\psi (\left\vert \mbf{q}%
\right\vert )\text{ for i.m.}\text{ \ }\mbf{q}\in
\mb{Z}^{m}\setminus\{\mbf{0}\}\right\},
\end{equation}
\noindent where $c=\max(\frac{m-1}{2},1).$  It is clear then that 
$W_0\left(m,n;\psi \right)\subseteq W_0\left( m,n;c\psi \right).$ 

Let $A$ be
the set of points of the form
\[
\left(X^{(1)},X^{(2)},\dots,X^{(m-1)},\sum\limits_{j=1}^{m-1} a_{j}^{(1)}{X^{(j)}}
,\dots, \sum\limits_{j=1}^{m-1} a_{j}^{(n-m+1)}X^{(j)}\right),
\]
where 
\[
\left(X^{(1)},X^{(2)},\dots,X^{(m-1)}\right)\in
W_0(m,m-1;\psi)
\]
and $a^{(i)}_{j}\in\left(\frac{-1}{2}, \frac{1}{2}\right)$ for $1\leq{}i\leq(n-m+1)$.
Note that
\begin{align*}
\left\vert\mbf{q}\cdot\sum\limits_{j=1}^{m-1}a_{j}^{(i)}X^{(j)}\right\vert
&=\left\vert\sum\limits_{j=1}^{m-1}a_{j}^{(i)}\mbf{q}\cdot
X^{(j)}\right\vert\\
 &\leq \sum\limits_{j=1}^{m-1}\vert a_{j}^{(i)} \vert
\vert \mbf{q} \cdot X^{(j)}\vert \\
&\leq \left(\sum\limits_{j=1}^{m-1}\vert
a_{j}^{(i)}\vert\right)\psi(|\mbf{ q}|)\\ &\leq
c\psi(|\mbf{q}|)\hspace{1cm} 1\leq i\leq(n-(m-1)),
\end{align*}
and it follows that $A\subseteq{}W(m,n; c\psi)$.

Now define the function
\[
\eta:W_0(m,m-1,\psi)\times
\left(\frac{-1}{2}, \frac{1}{2}\right)^{(n-(m-1))(m-1)}\to A
\]
by
\begin{eqnarray*}
\eta\left(X^{(1)},X^{(2)},\dots,X^{(m-1)},
a_{1}^{1},\dots,a_{m-1}^{1},\dots,a_{1}^{(n-(m-1))},\dots,a_{m-1}^{(n-(m-1))}\right)&=&\\
\left(X^{(1)},X^{(2)},\dots,X^{(m-1)},\sum\limits_{j=1}^{m-1}
a_{j}^{(1)}X^{(j)},\dots,\sum\limits_{j=1}^{m-1}
a_{j}^{(n-m+1)}X^{(j)}\right).
\end{eqnarray*}
Note that $\eta$ is surjective and that the vectors  $X^{j}$, for
$j=1,\dots,m-1$, are linearly independent. This ensures that $\eta$ is well defined,
one-to-one and the Jacobian, $J(\eta)$, of $\eta$ is of maximal rank. The function
$\eta$ is therefore an embedding and its range is
diffeomorphic to $A$. This in turn implies that $\eta$ is
(locally) bi-Lipschitz.

\subsection{The Infinite Measure Case}

As mentioned above the proof of Theorem~\ref{Mumtaz2} is split into two parts. In this section we concentrate on the infinite measure case which can be deduced from the following lemma.
\begin{lemma}
    \label{lem:infmeslem}
 Let $\psi$ be an approximating function and let $f$ and $g:r\to r^{-(n-(m-1))(m-1)}f(r)$ be dimension
functions with $r^{-(m-1)(n+1)}f(r)\to\infty$ as $r\to{}0$. 
Further, let  $r^{-m(m-1)}g(r)$ be monotonic and $r^{-(m-1)^2}g(r)$ be increasing. If
\[
  \sum_{r=1}^{\infty}f(\Psi
    (r))\Psi(r)^{-(m-1)n}r^{m-1}=\infty,
\]
then
\[
\mcal{H}^{f}(A)=\infty.
\]
\end{lemma}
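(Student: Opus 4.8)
The plan is to transport the divergence statement for $W_0(m,m-1;\psi)$ (which falls under the $m>n$ regime already handled by Theorem~\ref{thm1}, since $m>m-1$) up to the set $A$ via the slicing lemma (Lemma~\ref{slicing}). The set $A$ is, by construction, the image under the bi-Lipschitz embedding $\eta$ of $W_0(m,m-1;\psi)\times\left(-\tfrac12,\tfrac12\right)^{(n-(m-1))(m-1)}$; informally $A$ is a ``product'' of $W_0(m,m-1;\psi)$ with a full-dimensional cube of dimension $l:=(n-(m-1))(m-1)$ sitting inside the ambient $\mathbb{I}^{mn}$. The slicing lemma is tailor-made for exactly this: to conclude $\mcal{H}^f(A)=\infty$ in $\mathbb{I}^{mn}$ (here $k=mn$), it suffices to exhibit a linear subspace $V$ of dimension $mn-l$ and a positive-$\mcal{H}^l$-measure set $S\subseteq V^\perp$ of translation vectors $b$ such that $\mcal{H}^g\bigl(A\cap(V+b)\bigr)=\infty$ for every $b\in S$, where $g(r)=r^{-l}f(r)$ is precisely the dimension function named in the lemma statement.

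First I would fix $V$ to be the $(m-1)n = mn-l$ dimensional coordinate subspace corresponding to the first $m(m-1)$ coordinates $(X^{(1)},\dots,X^{(m-1)})$ together with... more carefully: $A$ is parametrised by the $m(m-1)$ coordinates of $(X^{(1)},\dots,X^{(m-1)})$ and the $l$ parameters $a^{(i)}_j$, and the remaining $mn-m(m-1)-l$... one checks $mn = m(m-1) + (n-m+1)m$ and $l=(n-m+1)(m-1)$, so the ``free'' part has dimension $m(m-1)+l = (m-1)n$, matching $\dim V$. I would choose $V$ so that a slice $V+b$ fixes the values of the $a^{(i)}_j$ and lets $(X^{(1)},\dots,X^{(m-1)})$ vary; then $A\cap(V+b)$ is the bi-Lipschitz image under the restriction of $\eta$ (with the $a$'s frozen at the values encoded by $b$) of $W_0(m,m-1;\psi)$. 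Since bi-Lipschitz maps preserve the finiteness/infiniteness of $\mcal{H}^g$-measure, it remains to show $\mcal{H}^g\bigl(W_0(m,m-1;\psi)\bigr)=\infty$. This is where the hypotheses of Lemma~\ref{lem:infmeslem} are used: $g(r)=r^{-(n-(m-1))(m-1)}f(r)$ with $r^{-m(m-1)}g(r)$ monotonic and $r^{-(m-1)^2}g(r)$ increasing are exactly the conditions needed to apply Theorem~\ref{thm1} with the pair $(m,m-1)$ in place of $(m,n)$ — note $(m-1)(m-1)=(m-1)n'$ and $m(m-1)=mn'$ for $n'=m-1$. The divergence hypothesis $\sum_r f(\Psi(r))\Psi(r)^{-(m-1)n}r^{m-1}=\infty$ must be matched against the series $\sum_r g(\Psi(r))\Psi(r)^{-(m-1)^2}r^{m-1}$ that Theorem~\ref{thm1} demands diverge; substituting $g(r)=r^{-(n-m+1)(m-1)}f(r)$ and using $\Psi(r)$ as the argument, one has $g(\Psi(r))\Psi(r)^{-(m-1)^2} = f(\Psi(r))\Psi(r)^{-(n-m+1)(m-1)-(m-1)^2} = f(\Psi(r))\Psi(r)^{-(m-1)n}$, so the two series are literally identical. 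Hence Theorem~\ref{thm1} gives $\mcal{H}^g\bigl(W_0(m,m-1;\psi)\bigr)=\mcal{H}^g(\mathbb{I}^{m(m-1)})$, and the condition $r^{-(m-1)(n+1)}f(r)\to\infty$ translates (again via the exponent bookkeeping, since $(m-1)(n+1)-l = m(m-1)$) into $r^{-m(m-1)}g(r)\to\infty$, forcing $\mcal{H}^g(\mathbb{I}^{m(m-1)})=\infty$.

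Assembling the pieces: pick $S$ to be (the image in $V^\perp$ of) the open cube $\left(-\tfrac12,\tfrac12\right)^{l}$ of $a$-values, which has positive $\mcal{H}^l$-measure; for each $b\in S$ we have just shown $\mcal{H}^g(A\cap(V+b))=\infty$; the slicing lemma then yields $\mcal{H}^f(A)=\infty$, as required. The step I expect to be the genuine obstacle — as opposed to routine — is verifying cleanly that the slices $A\cap(V+b)$ really are bi-Lipschitz copies of $W_0(m,m-1;\psi)$ uniformly in $b$, i.e. that freezing the $a$-coordinates in $\eta$ leaves an embedding with Jacobian bounded away from $0$ and $\infty$ on the relevant region; the linear independence of the $X^{(j)}$ (noted after the definition of $\eta$) is what makes this work, but some care is needed because that independence could in principle degenerate near the boundary of the parameter domain. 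A secondary bookkeeping point requiring care is the exact matching of the monotonicity/growth hypotheses on $f$ with those required by Theorem~\ref{thm1} for the pair $(m,m-1)$; I have indicated above why the exponents line up, but this should be checked term by term.
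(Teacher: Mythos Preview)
Your overall strategy---reduce to $W_0(m,m-1;\psi)$ via slicing and then invoke Theorem~\ref{thm1} for the pair $(m,m-1)$---is exactly what the paper does, and your verification that the series $\sum_r g(\Psi(r))\Psi(r)^{-(m-1)^2}r^{m-1}$ coincides with $\sum_r f(\Psi(r))\Psi(r)^{-(m-1)n}r^{m-1}$, together with the matching of the monotonicity hypotheses on $g$ against those required by Theorem~\ref{thm1} with $n'=m-1$, is correct. The gap is in how you set up the slicing. You propose to apply Lemma~\ref{slicing} directly to $A\subset\mb{I}^{mn}$ with $k=mn$, choosing $V$ so that a slice $V+b$ ``fixes the values of the $a^{(i)}_j$''. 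But the $a^{(i)}_j$ are parameters of $\eta$, not coordinates in $\mb{R}^{mn}$: for each fixed tuple of $a$'s the locus $\{(X^{(1)},\dots,X^{(m-1)},\sum_j a^{(1)}_jX^{(j)},\dots)\}$ is indeed a linear subspace of $\mb{R}^{mn}$, of dimension $m(m-1)$, but \emph{different choices of the $a$'s give different, non-parallel subspaces through the origin}. These are not of the form $V+b$ for one fixed $V$, so Lemma~\ref{slicing} as stated does not apply to them. (Your dimension bookkeeping also slips: $mn-l=n+(m-1)^2$, not $(m-1)n$; and $m(m-1)+l=(m-1)(n+1)$, not $(m-1)n$.)

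The paper avoids this by reversing the order of the two ingredients you identify. Since $\eta$ is bi-Lipschitz one has $\mcal{H}^f(A)\asymp\mcal{H}^f\bigl(W_0(m,m-1;\psi)\times\mb{I}^{l}\bigr)$, and this product lives in $\mb{I}^{(m-1)(n+1)}$ rather than $\mb{I}^{mn}$. Lemma~\ref{slicing} is then applied with $k=(m-1)(n+1)$, $V=\mb{I}^{m(m-1)}\times\{0\}^{l}$ and $S=\{0\}^{m(m-1)}\times\mb{I}^{l}\subset V^\perp$. In this product space each slice $(V+b)\cap B$ is exactly a translate of $W_0(m,m-1;\psi)\times\{0\}^{l}$, so its $\mcal{H}^g$-measure is that of $W_0(m,m-1;\psi)$ on the nose, and Theorem~\ref{thm1} finishes the job. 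In short: transport through $\eta$ first to obtain a genuine Cartesian product, then slice there---this makes the slices honestly affine and eliminates the obstacle you yourself flagged as the delicate step.
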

\begin{proof}
As $\eta$ is bi-Lipschitz, we have that 
\begin{eqnarray*} \mcal{H}^{f}(A)
&=&\mcal{H}^{f}\left(\eta\left(W_0(m,m-1,\psi)\times
\mb{I}^{(n-(m-1))(m-1)}\right)\right)\notag\\&\asymp&\mcal{H}^{f}\left(W_0(m,m-1,\psi)\times
\mb{I}^{(n-(m-1))(m-1)}\right).
\end{eqnarray*}

The proof relies on the slicing technique of Lemma~\ref{slicing}.
Let $B:=W_0(m,m-1;\psi)\times
\mb{I}^{(n-(m-1))(m-1)}\subseteq \mb{I}^{(m-1)(n+1)}$
and $V$ be the space $\mathbb{I}^{m(m-1)}\times{}\{0\}^{(m-1)(n+1-m)}$.
As $W_0(m,m-1;\psi)$ is a $\limsup$ set, $B$ is a
Borel set. 
We know that
$\dim W_0(m,m-1; \psi) = m(m-1)$ by Theorem~\ref{thm1} and this means that $W_0(m,m-1;\psi)$ is dense in $\mathbb{I}^{m(m-1)}$. 
Let $S:=\{0\}^{m(m-1)}\times\mb{I}^{(n+1-m)(m-1)}$. Clearly $S$ is a 
subset of $V^\perp$,
and further it has positive $\mcal{H}^{(n-(m-1))(m-1)}$-measure. Now for each 
$b\in{}S$
\begin{eqnarray*}
\mcal{H}^{g}\left(B\cap(V+b)\right)&=&\mcal{H}^{g}\left((W_0(m,m-1;\psi)\times
\mb{I}^{(n-(m-1))(m-1)})\cap(V+b)\right)\notag\\&=&\mcal{H}^{g}(\widetilde{W}_0(m,m-1,\psi)+b)\notag\\&=&\mcal{H}^{g}\left(\widetilde{W}_0(m,m-1;\psi)\right)
\label{eqn:gmesB},
\end{eqnarray*}
where $\widetilde{W}_0(m,m-1;\psi)=W(m,m-1;\psi)\times\{0\}^{(n+1-m)(m-1)}$. Thus the $g$-measure of 
$\widetilde{W}_0(m,m-1;\psi)$ conicides with the $g$-measure of $W(m,m-1;\psi)$ and 
Now applying Theorem~\ref{thm1} 
with $n=m-1$ implies that 
$\mcal{H}^{g}\left(B\cap(V+b)\right)=\infty$
if
\[
\sum_{r=1}^\infty
r^{m-1}g(\Psi(r))\Psi(r)^{-(m-1)^2}=\infty.
\]
Applying Lemma~\ref{slicing}, we have 
$\mcal{H}^{f}(A)=\infty$
if
\[
\sum_{r=1}^\infty
r^{m-1}g(\Psi(r))\Psi(r)^{-(m-1)^2}=\infty
\]
and we conclude that $\mcal{H}^{f}(A)=\infty$ 
if
\[
 \sum_{r=1}^\infty
f(\Psi(r))\Psi(r)^{-(m-1)n}r^{m-1}=\infty,
\]
as required.
\end{proof}

We can now compete the proof of Theorem~\ref{Mumtaz2}. 
As
$A\subseteq W_0(m,n;c\psi)$,
$\mcal{H}^f(A)=\infty$ implies that $\mcal{H}^f(W_0(m,n;c\psi))=\infty$ and we need only show 
that the value of the constant $c$ is
irrelevant. Recall that $c\geq{}1.$ 
For convenience let $\psi_c(r):=\frac{\psi(r)}{c}$,
$\Psi_c(r):=\frac{\Psi(r)}{c}$, $\sum:=\sum_{r=1}^{\infty}
f(\Psi(r))\Psi(r)^{-(m-1)n}r^{m-1}$ and
$\sum_c:=\sum_{r=1}^{\infty}f(\Psi_c(r))\Psi_c(r)^{-(m-1)n}r^{m-1}$
Since $r^{-(m-1)(n+1)}f(r)$ is decreasing it follows that
\[
\infty=\sum\leq c_1\sum{}_c
\]
where $c_1=c^{-(m-1)(n+1)}$. Therefore
$\sum_c=\infty$ if
$\sum=\infty$ and we have $\mcal{H}^{f}(W_0( m,n;c\psi_c)=\infty$ if 
$
\sum_{r=1}^\infty
f(\Psi(r))\Psi(r)^{-(m-1)n}r^{m-1}=\infty.
$
Finally, it follows that $\mcal{H}^{f}(W_0( m,n;\psi)=\infty$ 
if
$
\sum_{r=1}^\infty
f(\Psi(r))\Psi(r)^{-(m-1)n}r^{m-1}=\infty,
$
as required.

\subsection{Finite measure case} 

We now come onto the case where $r^{-(m-1)(n+1)}f(r)\to C$ as $r\to 0$ and $C>0$ is finite.  
In this case 
$\mcal{H}^f$ is comparable to 
$(m-1)(n+1)$--dimensional
Lebesgue measure. 
Note that in this case the dviergence of the sum
\[
 \sum_{r=1}^\infty{}f(\Psi(r))\Psi(r)^{-(m-1)n}r^{m-1}
\]
is in direct correspondence with that of the sum
\[
  \sum_{r=1}^\infty{}\psi^{m-1}(r).
\]

We begin with the following general lemma, the proof of which we leave to the reader. 
\begin{lemma}\label{fulllemma}
 Suppose that $L\subset\mb{R}^l$, $M\subset\mb{R}^k$ and $\eta:L\to M$ is an onto bi--Lipschitz transformation. That is there exists constants $c_1$ and $c_2$ with
 $0<c_1\leq c_2<\infty$, such that
 \[
    c_1 d_L(x,y) \leq d_M(\eta(x),\eta(y)) \leq c_2{}d_L(x,y)
  \]
  for any $x,y\in{}L$ where $d_L$ and $d_M$ are the respective metrics on $L$ and $M$.
  Then for any $C\subseteq L$,  with $|C|_L = 0$, we have $|\eta(C)|_M =0$ and for any $C^\prime\subseteq L$ with $|L\setminus C^\prime|_L=0$, 
  $|\eta(L\setminus C^\prime)|_M \ = \ 0$ where $|\cdot|_L$ (\textit{respec} $|\cdot|_M$) denotes induced 
  measure on $L$ (\textit{respec} $M$). 
  
  That is $\eta$ preserves a $zero-full$ law.
\end{lemma}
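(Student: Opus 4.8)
The plan is to reduce everything to the single elementary fact that a Lipschitz map cannot increase Hausdorff measure by more than a fixed multiplicative factor, and then to identify the relevant induced measures with the appropriate Hausdorff measures. First I would record the standard estimate: if $h:L\to\mb{R}^{k}$ is Lipschitz with constant $c$ and $E\subseteq L$, then $\mcal{H}^{s}(h(E))\leq c^{s}\,\mcal{H}^{s}(E)$ for every $s>0$. This is immediate from the definition of Hausdorff measure given in \S\ref{hm}: given any $\rho$-cover $\{U_{i}\}$ of $E$, the sets $h(U_{i}\cap L)$ form a $(c\rho)$-cover of $h(E)$ with $\mathrm{diam}(h(U_{i}\cap L))\leq c\,\mathrm{diam}(U_{i})$, so $\mcal{H}^{s}_{c\rho}(h(E))\leq c^{s}\sum_{i}\mathrm{diam}(U_{i})^{s}$; taking the infimum over covers of $E$ and letting $\rho\to0$ yields the claim. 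Only the upper Lipschitz bound $c_{2}$ is needed for this; the lower bound $c_{1}$ would be required to run the same argument for $\eta^{-1}$, which we do not need. Note also that no measurability of $h(E)$ is required, since $\mcal{H}^{s}$ is an outer measure defined on all subsets (and a set of vanishing $l$-dimensional Hausdorff outer measure is automatically Lebesgue measurable and null).

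Next I would fix $l:=\dim L$ and observe that the induced measure on $L\subseteq\mb{R}^{l}$ is, up to a fixed constant, the restriction of $\mcal{H}^{l}$, and likewise that the induced measure on $M$ is comparable to $\mcal{H}^{l}\!\restriction M$; in the application $M=A$ lies inside the manifold $\Gamma$ of dimension $l=(m-1)(n+1)$, on which the normalised volume is locally comparable to $\mcal{H}^{l}$, so the identification is legitimate on both sides with the \emph{same} exponent $l$. With these identifications in place, the first assertion is just the displayed estimate with $s=l$ applied to $C$: from $|C|_{L}=0$ we get $\mcal{H}^{l}(C)=0$, hence $\mcal{H}^{l}(\eta(C))\leq c_{2}^{l}\,\mcal{H}^{l}(C)=0$, hence $|\eta(C)|_{M}=0$.

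For the second assertion I would apply the first assertion to the set $C:=L\setminus C'$, which satisfies $|C|_{L}=0$ by hypothesis, obtaining $|\eta(L\setminus C')|_{M}=0$. Since $\eta$ is onto, $M=\eta(C')\cup\eta(L\setminus C')$, so $M\setminus\eta(C')\subseteq\eta(L\setminus C')$ is null; that is, $\eta$ sends a full subset of $L$ to a full subset of $M$. Together with the first assertion this is exactly the claimed preservation of the zero--full law, which is what gets used to transfer the Khintchine--Groshev dichotomy for $W_{0}(m,m-1;\psi)$ across the bi-Lipschitz parametrisation $\eta$. The only point genuinely requiring care is the bookkeeping of which Hausdorff exponent plays the role of the ``induced measure'' on each side: once one checks that $\mcal{H}^{l}$ with $l=\dim L$ is comparable to the induced measures on both $L$ and $M$ (in particular that passing to the higher ambient dimension $k$ of $M$ does not change the natural measure on $A$), the remainder is routine and, as the statement says, may safely be left to the reader.
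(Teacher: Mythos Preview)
Your proposal is correct, and there is in fact nothing to compare it against: the paper states the lemma and then explicitly writes ``the proof of which we leave to the reader,'' giving no argument at all. Your route via the standard Lipschitz estimate $\mcal{H}^{s}(h(E))\leq c^{s}\mcal{H}^{s}(E)$, together with the identification of the induced measures on $L$ and $M$ with $\mcal{H}^{l}$ for $l=\dim L$, is exactly the natural way to fill the gap and is presumably what the authors had in mind. The one point you rightly flag as requiring care --- that the induced measure on $M\subset\mb{R}^{k}$ is comparable to $\mcal{H}^{l}$ with the \emph{same} exponent $l$ as on $L$ --- is handled in the paper's application by the fact that $M=A$ sits inside the $(m-1)(n+1)$-dimensional manifold $\Gamma$, so your bookkeeping is consistent with how the lemma is actually used.
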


In applying Lemma~\ref{fulllemma}, we first need to show that 
$W_0(m,m-1;\psi)\times \mb{I}^{(n-m+1)(m-1)}$ has $full$ Lebesgue
measure in $\mb{I}^{(m-1)(n+1)}$. Theorem~\ref{thm1} implies that 
$|W_0(m,m-1;\psi)|_{m(m-1)} = 1$ if
$\sum_{r=1}^{\infty}\psi(r)^{m-1}=\infty$ and a straightforward application of  
Fubini's Theorem gives
\[
|W_0(m,m-1;\psi)\times
\mb{I}^{(n-(m-1))(m-1)}|_{(m-1)(n+1)}= 1
\]
as the Lebesgue measure of a product of two sets is simply the product of the measures of the two sets. 
It follows then that
$W_0(m,m-1;\psi)\times \mb{I}^{(n-m+1)(m-1)}$ is full in $\mathbb{I}^{(m-1)(n+1)}$,
as required.  

It remains to prove that $A$, the image of $W_0(m,m-1;\psi)\times \mb{I}^{(n-m+1)(m-1)}$ under $\eta$ is full in $\Gamma$. To do this we use local charts on $\Gamma$. As $\Gamma$ is an $(m-1)(n+1)$-dimension smooth manifold, we know that there is a countable atlas for $\Gamma$. Take any chart in the atlas, say $(O,\nu)$ where $O$ is an open set in $\mathbb{R}^{(m-1)(n+1)}$ and $\nu$ is the (local) diffeomorphism from $O$ to $\Gamma$. Now, $\eta$ is invertiable and $\eta^{-1}(\nu(O))$ is in $\mathbb{I}^{(m-1)(n+1)}$. We have just shown that $W_0(m,m-1;\psi)\times \mb{I}^{(n-m+1)(m-1)}$ has full measure and so therefore must its intersection with $\eta^{-1}(\nu(O))$. It follows then that $\eta$ of this intersection must have the same induced measure on $\Gamma$ as $\nu(O)$ does. We can repeat this argument for each element of the atlas of $\Gamma$ and it follows that $\eta(A)$ must be full in $\Gamma$ as required. 

This completes the proof of Theorem~\ref{Mumtaz2}.

\end{document}